\newlist{mylist}{enumerate}{1}
\setlist[mylist,1]{label = \textup{(\alph*)},left = 0pt}   
\DeclareMathOperator{\relint}{int}
\DeclareMathOperator{\relbdy}{\partial\!}
\DeclareMathOperator{\linspan}{span}
\DeclareMathOperator{\conv}{conv}
\DeclareMathOperator{\diag}{diag}
\DeclareMathOperator{\cliquenum}{\omega}
\DeclareMathOperator{\supp}{supp}
\DeclareMathOperator{\bary}{bary}
\DeclareMathOperator{\Lagr}{\mathcal{L}}
\DeclareMathOperator{\operatorgkkt}{g\mathcal{KKT}}
\DeclareMathOperator{\operatorkkt}{\mathcal{KKT}}
\newcommand{\kkt}[1]{\operatorkkt(#1)}
\newcommand{\gkkt}[1]{\operatorgkkt(#1)}
\newcommand{\kktsubgraph}[2]{\operatorkkt\left(#1, #2\right)}
\newcommand{\gkktsubgraph}[2]{\operatorgkkt\left(#1, #2\right)}
\newcommand{\tuplenum}{\mathbb{E}}
\newcommand{\bbN}{\mathbb{N}}
\newcommand{\bbZ}{\mathbb{Z}}
\newcommand{\bbR}{\mathbb{R}}
\newcommand{\QuadForm}[2]{{#2}\! ^\top \! {#1} {#2}}
\let\vec\relax
\newcommand{\vec}[1]{\vb*{#1}}
\newcommand{\mtx}[1]{\vb*{#1}}
\newcommand{\intupto}[1]{[#1]}
\newcommand*{\card}[1]{\lvert #1 \rvert}
\theoremstyle{thmstyleone}
\newtheorem{theorem}{Theorem}
\newtheorem{corollary}{Corollary}
\newtheorem{lemma}{Lemma}
\newtheorem{proposition}{Proposition}
\newtheorem{definition}{Definition}
\def \bisbn  #1{ISBN #1}\fi
\def \binits#1{#1}\fi
\def \bauthor#1{#1}\fi
\def \batitle#1{#1}\fi
\def \bjtitle#1{#1}\fi
\def \bvolume#1{\textbf{#1}}\fi
\def \byear#1{#1}\fi
\def \bissue#1{#1}\fi
\def \bfpage#1{#1}\fi
\def \blpage #1{#1}\fi
\def \burl#1{\textsf{#1}}\fi
\def \doiurl#1{\url{https://doi.org/#1}}\fi
\def \betal{\textit{et al.}}\fi
\def \binstitute#1{#1}\fi
\def \binstitutionaled#1{#1}\fi
\def \bctitle#1{#1}\fi
\def \beditor#1{#1}\fi
\def \bpublisher#1{#1}\fi
\def \bbtitle#1{#1}\fi
\def \bedition#1{#1}\fi
\def \bseriesno#1{#1}\fi
\def \blocation#1{#1}\fi
\def \bsertitle#1{#1}\fi
\def \bsnm#1{#1}\fi
\def \bsuffix#1{#1}\fi
\def \bparticle#1{#1}\fi
\def \barticle#1{#1}\fi
\def \bconfdate #1{#1}\fi
\def \botherref #1{#1}\fi
\def \url#1{\textsf{#1}}\fi
\def \bchapter#1{#1}\fi
\def \bbook#1{#1}\fi
\def \bcomment#1{#1}\fi
\def \oauthor#1{#1}\fi
\def \citeauthoryear#1{#1}\fi
\def \endbibitem {}\fi
\def \bconflocation#1{#1}\fi
\def \arxivurl#1{\textsf{#1}}\fi
\begin{document}

\title{On Generalized KKT Points for the Motzkin-Straus Program}

\author*[1,2]{\fnm{Guglielmo} \sur{Beretta \orcidlink{0000-0002-4757-1965}
} }\email{guglielmo.beretta@unive.it}
\equalcont{These authors contributed equally to this work.}

\author[1,3,4]{\fnm{Alessandro} \sur{Torcinovich \orcidlink{0000-0001-8110-1791}
}}\email{alessandro.torcinovich@unibz.it}
\equalcont{These authors contributed equally to this work.}

\author[1,5,6]{\fnm{Marcello} \sur{Pelillo \orcidlink{0000-0001-8992-9243}
}}\email{pelillo@unive.it}

\affil*[1]{\orgdiv{DAIS}, \orgname{Ca' Foscari University of Venice}, \orgaddress{\street{Via Torino 155}, \postcode{30170} \city{Venezia}, \country{Italy}}}

\affil[2]{\orgdiv{DAUIN}, \orgname{Polytechnic University of Turin}, \orgaddress{\street{Corso Castelfidardo 34/d}, \postcode{10138} \city{Torino}, \country{Italy}}}

\affil[3]{\orgdiv{D-INFK}, \orgname{ETH}, \orgaddress{\street{Universit\"{a}tstrasse 6}, \city{8006 Z\"{u}rich}, \country{Switzerland}}}

\affil[4]{\orgdiv{Faculty of 
Engineering}, \orgname{Free University of Bolzano-Bozen}, \orgaddress{\street{Via 
Buozzi 1}, \postcode{39100} \city{Bolzano}, \country{Italy}}}

\affil[5]{\orgdiv{College of Mathematical Medicine}, \orgname{Zhejiang Normal University}, \orgaddress{\street{688 Yingbin Road}, \city{Jinhua}, \state{Zhejiang Province},
 \postcode{321004} \country{China}}}

\affil[6]{\orgname{European Centre for Living Technology}, \orgaddress{\street{Ca' Bottacin, Dorsoduro 3911, Calle Crosera}, \postcode{30123} \city{Venezia}, \country{Italy}}}
\abstract{
    In 1965, T.~S.~Motzkin and E.~G.~Straus established an elegant connection between the clique number of a graph and the global maxima of a quadratic program defined on the standard simplex. Over the years, this seminal finding has inspired a number of studies aimed at characterizing the properties of the (local and global) solutions of the Motzkin-Straus program. The result has also been generalized in various ways and has served as the basis for establishing new bounds on the clique number and developing powerful clique-finding heuristics. Despite the extensive work done on the subject, apart from a few exceptions, the existing literature pays little or no attention to the Karush-Kuhn-Tucker (KKT) points of the program. In the conviction that these points might reveal interesting structural properties of the graph underlying the program, this paper tries to fill in the gap. In particular, we study the generalized KKT points of a parameterized version of the Motzkin-Straus program, which are defined via a relaxation of the usual first-order optimality conditions, and we present a number of results that shed light on the symmetries and regularities of certain substructures associated with the underlying graph.
    These combinatorial structures are further analyzed using barycentric coordinates, thereby providing a link to a related quadratic program that encodes local structural properties of the graph. This turns out to be particularly useful in the study of the generalized KKT points associated with a certain class of graphs that generalize the notion of a star graph. Finally, we discuss the associations between the generalized KKT points of the Motzkin-Straus program and the so-called replicator dynamics, thereby offering an alternative, dynamical-system perspective on the results presented in the paper.
}    

\keywords{Standard quadratic optimization, KKT points, Clique, Regular graphs, Equitable partition, Replicator dynamics}

\pacs[MSC Classification]{90C20,90C35,90C27,05C69,05C50}

\maketitle
\section{Introduction}\label{sec:intro}
In 1965, T.~S.~Motzkin and E.~G.~Straus published a study \cite{motzkin_straus_1965} on the quadratic program (QP)
\begin{maxi}|l|
{\vec{x} \in \Delta_n}
{f(\vec{x}) =\QuadForm{\mtx{A}}{\vec{x}},}
{ \label{prog:ms}}
{ }
\end{maxi}
where $\mtx{A}$ is the adjacency matrix of a graph $G$ with $n$ nodes, and $\Delta_n$ denotes the \emph{standard simplex} in $\bbR^{n}$, i.e., the convex hull of the canonical basis of $\bbR^n$. Motzkin and Straus discovered that the value of Program~\eqref{prog:ms} is $1 - {\cliquenum(G)}^{-1}$, where $\cliquenum(G)$ stands for the clique number of $G$, and they also provided an explicit formula to map every maximum clique of $G$ to a global solution for Program~\eqref{prog:ms}, so that distinct cliques correspond to distinct solutions. This correspondence naturally extends to an injection that maps every non-empty set of nodes of $G$ to an element of $\Delta_n$ and that can be used \cite{pelillo_jagota_1996,tang_maxima_2022} to characterize (strict) local solutions of Program~\eqref{prog:ms} in terms of (strictly) maximal cliques of $G$.

However, as discussed e.g. in \cite{pelillo_jagota_1996}, the Motzkin-Straus QP can exhibit \emph{spurious} solutions, i.e., optima that do not correspond to any maximum clique. This may have a negative impact on applications, such as the one pioneered in \cite{pardalos_global_1990}, where Motzkin and Straus's discoveries are used to find maximum cliques. To address this issue, scholars investigated modified versions of the Motzkin-Straus QP, see e.g. \cite{bomze_1997,gibbons_continuous_1997,hungerford_rinaldi_2017}. In particular, in \cite{bomze_1997} the Motzkin-Straus QP was modified by incorporating a regularization term into its objective function, and this adjustment resulted in a program with local (global) solutions that are in one-to-one correspondence with maximal (maximum) cliques.

Since its introduction, the Motzkin-Straus QP and its variations have been extensively explored in the literature, inspiring various bounds on the clique number, see e.g. \cite{wilf_spectral_1986,bbpp_maximum_1999,nikiforovInequalitiesLargestEigenvalue2002,budinich_exact_2003,stozhkov_new_2017}, and a number of heuristics for the maximum clique problem \cite{pardalos_global_1990,gibbons_continuous_1997,bomze_approximating_2000,bomze_annealed_2002,pelillo_torsello_payoff_monotonic_2006,daniluk_ImplementationMaximumClique_2019,bonamiSolvingQuadraticProgramming2019} as well as for extensions of the maximum clique problem in the presence of weights on the nodes \cite{gibbons_continuous_1997,bbpp_maximum_1999,bomze_approximating_2000}, or 
on the edges \cite{pavan_dominant_2007,rota_bulo_dominant-set_2017},
or in the case of hypergraphs \cite{rota_bulo_generalization_2009}.

As a natural consequence of Motzkin and Straus's findings, most of these studies focused on Motzkin-Straus QP primarily due to the properties of its optima.
However, little to no attention was put on a broader set of points, namely the Karush-Kuhn-Tucker (KKT) points \cite{luenberger_linear_2016} of the program, and their connection with specific combinatorial structures that are associated with these points --- with some exceptions, see e.g. \cite{gibbons_continuous_1997,bomzeRegularityDegeneracyDynamics2002,tang_maxima_2022}.

This paper is the outcome of an exploratory study aimed at investigating additional combinatorial properties related to the Motzkin-Straus QP. To pursue this task, we considered a parametric version of the program, which is due to Bomze \cite{bomze_1997} and was further analyzed in \cite{bomze_annealed_2002,pelillo_torsello_payoff_monotonic_2006},
and we performed a theoretical analysis pertaining to its generalized KKT points \cite{bomze_1997,bomzeRegularityDegeneracyDynamics2002}, defined by a relaxation of the usual KKT conditions.

In this paper, we present new results linking the properties of these points with those of certain substructures of the graph underlying the program, including symmetries and regularity.
After introducing the reader to the parametrized program and its generalized KKT points, we extend known results that link these points and regular induced subgraphs, and we describe how a generalized KKT point is related to the symmetries, i.e., automorphisms, of the subgraph induced by its support. 
To further analyze the combinatorial structures associated with these points, we leverage the notion of barycentric coordinates \cite{hormann_2017_GBC,quarteroni_2017_NMfDP} to obtain a convenient representation of points in the standard simplex, and study some conditions such that these new coordinates, when related to generalized KKT points, satisfy the KKT conditions for another QP encoding the local structure of the graph.
Under additional hypotheses that involve equitable partitions \cite{godsil_agt_2001}, it is possible to obtain stronger results, valid in particular for a class of graphs that generalizes star graphs \cite{diestel_2005_graph_theory}.
Finally, we discuss a connection with replicator dynamics, a model developed in evolutionary game theory \cite{taylor_1978_ESS,Hofbauer_Sigmund_Egap_1998,weibull_1995_EGT} that found applications in diverse fields and is related to standard quadratic optimization \cite{bomze_1997,bomzeRegularityDegeneracyDynamics2002}.
Thanks to this connection, the results presented in the paper admit an alternative, dynamical-system-related interpretation.
A shorter version of this work can be found in \cite{beretta_generalized_2024}.
\section{Notation}\label{sec:notation}
For $n \in \bbN$, we set $\intupto{n} = \set{ i \in \bbN : 1 \leq i \leq n}$.
We denote by $\vec{0}$ (resp. $\vec{1}$) a vector with every component equal to $0$ (resp. $1$), with dimension in agreement with the context in which it is used.
In this paper, $\diag(r_1, r_2, \dots, r_n)$ denotes the $n \times n$ diagonal matrix having on its main diagonal $r_i$ as its $i$-th element.
Given $\vec{x}_1$, $\vec{x}_2$, $\dots$, $\vec{x}_k \in \bbR^{n}$, their \emph{convex hull} $\conv(\vec{x}_1, \vec{x}_2, \dots, \vec{x}_k)$ is the set $\set{ \sum_{m = 1}^k y_m \vec{x}_m : \vec{y} \in \Delta_k}$, which is the smallest convex subset of $\bbR^n$ containing $\vec{x}_1$, $\vec{x}_2$, $\dots$, $\vec{x}_k$.
In particular, we write $[\vec{x}_1, \vec{x}_2] =\conv(\vec{x}_1, \vec{x}_2)$ for the closed \emph{segment} with endpoints $\vec{x}_1$ and $\vec{x}_2$.
The \emph{linear span} of $\vec{x}_1$, $\vec{x}_2$, $\dots$, $\vec{x}_k$ is the set $\linspan(\vec{x}_1, \vec{x}_2, \dots, \vec{x}_k) = \set{ \sum_{m = 1}^k \alpha_m \vec{x}_m : \vec{\alpha} \in \bbR^k}$, which is the smallest $\bbR$-linear subspace of $\bbR^n$ containing $\vec{x}_1$, $\vec{x}_2$, $\dots$, $\vec{x}_k$.

The \emph{support} of a vector $\vec{x} \in \bbR^{n}$ is the set $\supp(\vec{x}) = \set{i \in \intupto{n} : x_i \neq 0}$.
The \emph{standard simplex} in $\bbR^n$ is the set $\Delta_n = \set{ \vec{x}\in \bbR^{n} : \vec{1}^{\top} \vec{x} = 1,\, \vec{x}\geq \vec{0} }$.
For a non-empty $S \subseteq \intupto{n}$, the \emph{face} of $\Delta_n$ associated with $S$ is the set
\begin{equation*}
    \Delta_n(S) = \set{\vec{x} \in \Delta_n : \supp(\vec{x}) \subseteq S},
\end{equation*}
and we also define
\begin{align*}
    \relint{\Delta_n(S)} &= \set{\vec{x} \in \Delta_n : \supp(\vec{x}) = S},\\
    \relbdy{\Delta_n(S)}&= \Delta_n(S) \setminus \relint{\Delta_n(S)},    
\end{align*}
where $\relint{\Delta_n(S)}$ and $\relbdy{\Delta_n(S)}$ are the \emph{relative interior} of $\Delta_n(S)$ and the \emph{relative boundary} of $\Delta_n(S)$ respectively.
In this notation, $\Delta_n(\intupto{n})$ is an alias for $\Delta_n$.

As for the graph-related notation, $G = (V, E)$ denotes in the sequel an unweighted undirected graph with no loops on the set $V$ and with set of edges $E \subseteq \binom{V}{2}$. Given two distinct nodes $i$, $j \in V$, we call $i$ a \emph{neighbor} of $j$, and write $i \sim j$, if $\set{i , j } \in E$. A node $i\in V$ is an \emph{end} of an edge $e \in E$ if $i \in e$.
Given a non-empty $S \subseteq V$, we write $G[S]$ for the subgraph of $G$ \emph{induced by $S$}, i.e., $G[S]$ is the graph on the set $S$ in which two nodes $i$, $j \in S$ are adjacent if and only if $\set{i, j} \in E$.
A non-empty $C \subseteq V$ is called a \emph{clique} if $i \sim j$ for every pair of distinct $i$, $j \in C$. A graph is said \emph{complete} if its set of nodes is a clique. 
The \emph{degree} of a node in a graph is the number of neighbors that node has in the graph. A graph $G'$ is \emph{regular} if every node of $G'$ has the same degree in $G'$, and, in particular, it is called a $d$-regular graph if every node of $G'$ has degree equal to $d$ in $G'$.
We also recall that an \emph{automorphism} of a graph is an isomorphism with itself, i.e., a permutation $\sigma$ of its nodes such that two nodes $i$ and $j$ are neighbors if and only if also $\sigma(i)$ and $\sigma(j)$ are neighbors.

\section{Parametric Motzkin-Straus Programs}\label{sec:param_ms}
Consider a graph $G = (V, E)$ on a finite non-empty set $V$, and let $n =  \card{V}$. Without loss of generality, we assume that $V = \intupto{n}$ to simplify the notation.

Let $\mtx{A}=[a_{ij}]_{i,j}$ be the adjacency matrix of $G$, i.e., the $n \times n$ symmetric matrix with coefficient $a_{ij}$ equal to $1$ if $i \sim j$ and equal to $0$ otherwise, and write $\mtx{I}$ for the $n \times n$ identity matrix. Fix now a parameter $c \in \bbR$ and consider the QP
\begin{maxi}|l|
    {\vec{x} \in \Delta_n}
    {f_c(\vec{x}) =\QuadForm{(\mtx{A} + c\mtx{I})}{\vec{x}}}
    { \label{prog:ms-c}}
    { }
\end{maxi}
and the associated  Lagrangian \cite{luenberger_linear_2016}
\begin{equation*}
    \Lagr (\vec{x},  \mu_0, \vec{\mu}) = 
    f_c(\vec{x})
    + \mu_0 (\vec{1}^\top \vec{x} - 1)
    + \vec{\mu}^\top \vec{x},
\end{equation*}
where $(\vec{x}, \mu_0, \vec{\mu} ) \in \bbR^n \times \bbR \times \bbR^n$.
Program~\eqref{prog:ms-c} is discussed in \cite{bomze_1997} for $c = \frac{1}{2}$ and in \cite{bomze_annealed_2002} in its more general formulation. Note that Program~\eqref{prog:ms-c} is precisely the Motzkin-Straus QP in case $c = 0$.

\begin{definition}\label{def:kkt}
    Given $\vec{x} \in \bbR^n$, we call $\vec{x}$ a \emph{KKT point} for Program~\eqref{prog:ms-c}, and write $\vec{x} \in \kkt{c}$, if $\vec{x} \in \Delta_n$ and some $(\mu_0, \vec{\mu}) \in \bbR \times \bbR^n$ exists such that
    \begin{subequations}\label{eq:kkt-sys}
        \begin{empheq}[left={\empheqlbrace}]{alignat = 2}
                    &\dfrac{\partial \Lagr}{ \partial \vec{x}} (\vec{x}, \mu_0, \vec{\mu}) = \vec{0}, & & \label{eq:kkt:lagr}\\
                    &\mu_i x_i = 0 &\qquad &\textrm{for every  $i \in V$},\label{eq:kkt:support}\\
                    &\vec{\mu} \geq \vec{0}. & &\label{eq:kkt:sign}
        \end{empheq}
    \end{subequations}
    We call $\vec{x}$ a \emph{generalized}
   \emph{KKT} (gKKT) \emph{point} for Program~\eqref{prog:ms-c}, and we write $\vec{x} \in \gkkt{c}$, if $\vec{x} \in \Delta_n$ and some $(\mu_0, \vec{\mu}) \in \bbR \times \bbR^n$ exists such that Eq.~\eqref{eq:kkt:lagr} and Eq.~\eqref{eq:kkt:support} hold. 
\end{definition}
In other words, the notion of KKT point is generalized%
\footnote{
    Generalized KKT points appear also in \cite{bomze_1997,bomze_annealed_2002}, where $\vec{x} \in \Delta_n$  is called a generalized KKT point if some $(\mu_0, \vec{\mu}) \in \bbR \times \bbR^n$ exists such that both Eq.~\eqref{eq:kkt:lagr} and $\vec{\mu}^{\top} \Vec{x} = 0$ are satisfied.
    If Eq.~\eqref{eq:kkt:sign} is satisfied, then $\vec{\mu}^{\top} \Vec{x} = 0$ and  Eq.~\eqref{eq:kkt:support} are equivalent, but in general the constraint $\vec{\mu}^{\top} \Vec{x} = 0$ is weaker than Eq.~\eqref{eq:kkt:support}. }
in  Definition~\ref{def:kkt} by dropping Eq.~\eqref{eq:kkt:sign}, which prescribes the sign condition on $\vec{\mu}$ appearing in Eq.~\eqref{eq:kkt-sys}. This obviously entails that $\kkt{c} \subseteq \gkkt{c}$.

A brief comment on this sign condition is helpful to provide insight into the definition of generalized KKT points. As it is well known \cite{luenberger_linear_2016}, KKT conditions: i) cause every multiplier associated with inactive constraints to be equal to zero; ii) prescribe the sign for multipliers associated with inequality constraints. In our case, for every $i \in V$ the multiplier $\mu_i$ appearing in Eq.~\eqref{eq:kkt-sys} corresponds to the constraint $x_i \geq 0$, and so the sign condition $\mu_i \geq 0$ appears in Eq.~\eqref{eq:kkt-sys}, together with $\mu_i x_i = 0$, which is meant to force $\mu_i = 0$ in case $x_i \neq 0$.
Consequently, for a given point $\vec{x}$ in the feasible set, dropping the sign condition on all multipliers is precisely how KKT conditions for $\vec{x}$ are modified if active inequality constraints are replaced with equality constraints.
Following this idea, we can prove that every element of $\gkkt{c}$ is a KKT point for a suitable modification of Program~\eqref{prog:ms-c}.

\begin{proposition}\label{prop:gkkt-prog}
   For every non-empty $S \subseteq V$, denote by $K(S)$ the set of KKT points for the program
    \begin{maxi}|l|[0]
            { \vec{x} \in \relint{\Delta_n(S)}}
            {f_c(\vec{x}).}
            {\label{p:face}}
            { }
    \end{maxi}
    Then
    \begin{equation}\label{eq:bigcup}
        \gkkt{c} = \bigcup_{S \in 2^V \setminus  \set{\emptyset}} K(S).
    \end{equation}
\end{proposition}
\begin{proof}
    By construction $K(S) \subseteq \relint{\Delta_n(S)}$ for every $S \in 2^V \setminus  \set{\emptyset}$, and 
    note that $ \set{ \relint{\Delta_n(S)} : S \in 2^V \setminus  \set{\emptyset}}$ is a partition for $\Delta_n$, hence Eq.~\eqref{eq:bigcup} is equivalent to
    \begin{equation}\label{eq:bigcup-equiv}
        \gkkt{c} \cap \relint{\Delta_n(S)}   = K(S) \qquad \text{for every $S \in 2^V \setminus  \set{\emptyset}$.}
    \end{equation}
    Let $S \in 2^V \setminus  \set{\emptyset}$ and let $\hat{\vec{x}} \in \relint{\Delta_n(S)}$, i.e., let $\hat{\vec{x}} \in \Delta_n$ satisfy $\supp(\hat{\vec{x}}) = S$.
    Then  $\hat{\vec{x}} \in \gkkt{c}$ if and only if some $(\mu_0, \vec{\mu}) \in  \bbR \times \bbR^n$ exists such that
    \begin{subequations}\label{eq:kkt-sys-II}
        \begin{empheq}[left={\empheqlbrace}]{alignat = 2}
            &\dfrac{\partial \Lagr}{ \partial \vec{x}} (\hat{\vec{x}}, \mu_0, \vec{\mu}) = \vec{0}, & &\notag \\
            &\mu_i= 0 &\qquad &\text{for every  $i \in S$}. \label{eq:kkt-sys-II:support}
        \end{empheq}
    \end{subequations}
    Note that Program~\eqref{p:face} can be written as
\begin{maxi!}|l|[0]
    {\vec{x} \in \bbR^n} 
    {f_c(\vec{x}) \notag} 
    { \label{p:face-equiv}} 
    { } 
    \addConstraint{ \vec{1}^{\top} \vec{x}}{= 1}{ \notag}
    \addConstraint{ x_i }{> 0}{\qquad \text{for every $i \in S$} \label{eq:face-supp}}
    \addConstraint{ x_i }{= 0}{\qquad \text{for every $i \in V \setminus S$}, \label{eq:face-nosupp}}
\end{maxi!}
and  $\hat{\vec{x}}$ is a KKT point for Program~\eqref{p:face-equiv} if and only if some $(\mu_0, \vec{\mu}) \in  \bbR \times \bbR^n$ exists such that
    \begin{subequations}\label{eq:face-kkt-sys}
        \begin{empheq}[left={\empheqlbrace}\;]{alignat = 2}
            &\dfrac{\partial \Lagr}{ \partial \vec{x}} (\hat{\vec{x}}, \mu_0, \vec{\mu}) = \vec{0}, & &\notag \\
            &\mu_i \hat{x}_i= 0 &\qquad &\text{for every  $i \in V$}, \label{eq:face:support}\\
            &\mu_i \geq 0 &\qquad &\text{for every  $i \in S$}. \label{eq:face:sign}
        \end{empheq}
    \end{subequations}
The constraints given in Eq.~\eqref{eq:face-supp} are not active in $\vec{x} = \hat{\vec{x}}$, hence Eq.~\eqref{eq:kkt-sys-II:support} holds if and only if both Eq.~\eqref{eq:face:support} and Eq.~\eqref{eq:face:sign} hold simultaneously. Consequently,
Eq.~\eqref{eq:kkt-sys-II} is equivalent to Eq.~\eqref{eq:face-kkt-sys}, which means that $\hat{\vec{x}} \in \gkkt{c}$ if and only if $\hat{\vec{x}}$ is a KKT point for Program~\eqref{p:face}. Since $S$ is arbitrary in $ 2^V \setminus  \set{\emptyset}$, this proves Eq.~\eqref{eq:bigcup-equiv}.
\end{proof}
\noindent
We remark that $\kkt{c} \cap \relint{\Delta_n} = \gkkt{c} \cap \relint{\Delta_n}$ by Proposition~\ref{prop:gkkt-prog}.

To prove that Definition~\ref{def:kkt} is satisfied, it is helpful to use the following simpler description of $\kkt{c}$ and $\gkkt{c}$, as in \cite{gibbons_continuous_1997,bomze_annealed_2002,bomzeRegularityDegeneracyDynamics2002}:

\begin{proposition}\label{prop:kkt}
     Let $\vec{x} \in \Delta_n$, let $\mtx{M} \in \bbR^{n \times n}$, define $\lambda = \QuadForm{\mtx{M}}{\vec{x}}$ and consider the properties:
    \begin{enumerate}[label = \textup{(S\arabic*)},left = 0pt]
        \item $(\mtx{M}\vec{x} )_i = (\mtx{M}\vec{x} )_j$ for every $i$, $j \in \supp(\vec{x})$; \label{item:support-weak}
        \item $(\mtx{M}\vec{x} )_i = \lambda$ for every $i \in \supp(\vec{x})$; \label{item:support}
        \item $(\mtx{M}\vec{x})_i \leq \lambda$ for every $i \in V \setminus \supp(\vec{x})$.\label{item:not-support}
    \end{enumerate}
    Then:
    \begin{mylist}
        \item  \ref{item:support-weak} and \ref{item:support} are equivalent; \label{item:mild}
        \item   $\vec{x} \in \kkt{c}$ if and only if both \ref{item:support} and \ref{item:not-support} hold for $\mtx{M} = \mtx{A} + c \mtx{I}$;
        \label{item:prop:kkt}
        \item   $\vec{x} \in \gkkt{c}$ if and only if \ref{item:support} holds for $\mtx{M} = \mtx{A} + c \mtx{I}$.\label{item:prop:gkkt}
   \end{mylist}
\end{proposition}
\begin{proof}
    \ref{item:mild}: First, observe that
    \begin{equation}
        \QuadForm{\mtx{M}}{\vec{x}} = \sum_{i \in \supp(\vec{x})} x_i (\mtx{M}\vec{x})_i.\label{eq:fc-equiv-supp}
    \end{equation}
    Clearly \ref{item:support} implies \ref{item:support-weak}. Conversely, if \ref{item:support-weak} holds, then for some $r \in \bbR$ we may write $(\mtx{M}\vec{x} )_i = r$ for every $i\in \supp(\vec{x})$, and Eq. \eqref{eq:fc-equiv-supp} yields
    \begin{equation*}
        \lambda = f_c(\vec{x}) = \sum_{i \in \supp(\vec{x})} x_i (\mtx{M}\vec{x})_i = \sum_{i \in \supp(\vec{x})} x_i r = r,
    \end{equation*}
    hence \ref{item:support} holds.
    
    \ref{item:prop:kkt}, \ref{item:prop:gkkt}: Assume $\mtx{M} = \mtx{A} + c \mtx{I}$ and note that Eq.~\eqref{eq:kkt:lagr} is equivalent to  $\vec{\mu}  =  - (2\mtx{M}\vec{x} + \mu_0 \Vec{1})$, hence Eq.~\eqref{eq:kkt:support} is equivalent to 
    \begin{equation*}
        (\mtx{M}\vec{x})_i  = -\frac{\mu_0}{2} \qquad \text{for every $i \in \supp(\vec{x})$,}\label{eq:lagrangian-deriv}
    \end{equation*}
    whereas Eq.~\eqref{eq:kkt:sign} is equivalent to 
    \begin{equation*}
        (\mtx{M}\vec{x})_i  \leq -\frac{\mu_0}{2} \qquad \text{for every $i \in V$}.
    \end{equation*}
    Then \ref{item:prop:kkt} and \ref{item:prop:gkkt} follow by \ref{item:mild}.
\end{proof}
Proposition~\eqref{prop:kkt} enables us to introduce a notation that extends Definition~\ref{def:kkt}.

\begin{definition}\label{def:alternative}
    Consider a graph $G'$ with set of nodes $V$. Then we write $\mtx{A}_{G'}$ for the adjacency matrix of $G'$, and we define for every $\gamma \in \bbR$
    \begin{align*}
        \kktsubgraph{G'}{\gamma} &=\set{\vec{x}\in \Delta_n : \text{\ref{item:support} and \ref{item:not-support} hold for $\mtx{M} = \mtx{A}_{G'} + \gamma \mtx{I}$}},\\
        \gkktsubgraph{G'}{\gamma} &=\set{\vec{x}\in \Delta_n : \text{\ref{item:support} holds for $\mtx{M} = \mtx{A}_{G'} + \gamma \mtx{I}$}}.
    \end{align*}    
\end{definition}
By Proposition~\ref{prop:kkt}, it is immediate to check that $\kktsubgraph{G'}{\gamma}$ (resp. $\gkktsubgraph{G'}{\gamma}$) is the set of KKT (resp. generalized KKT) points for the program
   \begin{maxi}|l|
        {\vec{x} \in \Delta_n}
        {g(\gamma,G', \vec{x}) =  \QuadForm{\left(\mtx{A}_{G'} + \gamma\mtx{I}\right)}{\vec{x}}.}
        { \label{prog:ms-c-compl}}
        { }
    \end{maxi}
\begin{theorem}\label{thm:tool}
    Let $S$ be a non-empty subset of $V$, and consider two graphs $G'$, $G''$ with set of nodes $V$ such that  $G'[S] = G''[S]$.
    Let $\vec{x} \in \Delta_n$ with $S = \supp(\vec{x})$.
    \begin{mylist}
        \item If $\vec{x} \in \gkktsubgraph{G'}{c}$, then $\vec{x} \in \gkktsubgraph{G''}{c}$; \label{i:tool:gkkt}
        \item If $\vec{x} \in \kktsubgraph{G'}{c}$, then it is not necessarily true that $\vec{x} \in \kktsubgraph{G''}{c}$.\label{i:tool:kkt}
        \item Assume $c \geq 0$ and that in $G''$ every node in $V\setminus S$ has no neighbors. If $\vec{x} \in \gkktsubgraph{G'}{c}$ then $\vec{x} \in \kktsubgraph{G''}{c}$. \label{i:tool:upgrade}
    \end{mylist}
\end{theorem}
\begin{proof}
    \ref{i:tool:gkkt}: Assume  $\vec{x} \in \gkktsubgraph{G'}{c}$. A direct computation shows that $G'[S]= G''[S]$ entails $(\mtx{A}_{G'} \vec{x})_i= (\mtx{A}_{G''}\vec{x})_i$ for every $i \in S$, and so the thesis follows by Definition~\ref{def:alternative}.
    
    \ref{i:tool:kkt}: Let $V = \set{1, 2, 3}$, let $S = \set{1, 2}$ and suppose $G'$ and $G''$ satisfy
    \begin{equation*}
        \mtx{A}_{G'} =
        \begin{pmatrix}
            0 & 1 & 0\\
            1 & 0 & 0\\
            0 & 0 & 0
        \end{pmatrix}, \qquad \mtx{A}_{G''}=
        \begin{pmatrix}
            0 & 1 & 1\\
            1 & 0 & 1\\
            1 & 1 & 0
        \end{pmatrix}.
    \end{equation*}
    Let $\vec{x} = \big(\frac{1}{2}, \frac{1}{2}, 0\big)^{\top}$.
    Then \begin{equation*}
        \mtx{A}_{G'} \vec{x} + c\vec{x}=\dfrac{1}{2}
        \begin{pmatrix}
            c +1 \\
            c + 1\\
            0
        \end{pmatrix} \qquad \mtx{A}_{G''} \vec{x} + c\vec{x} =\dfrac{1}{2}
        \begin{pmatrix}
            c +1 \\
            c + 1\\
            2
        \end{pmatrix},
        \end{equation*}
        hence $\vec{x}\in \kktsubgraph{G'}{c} \setminus\kktsubgraph{G''}{c}$ in case $-1 \leq c < 1$.
        
    \ref{i:tool:upgrade} Assume  $\vec{x} \in \gkktsubgraph{G'}{c}$. The condition on $c$ gives $(\mtx{A}_{G'} \vec{x})_i= (\mtx{A}_{G''}\vec{x})_i \geq 0$ for every $i \in S$, whereas $(\mtx{A}_{G''}\vec{x})_i = 0$ for every $i \in V \setminus S$, hence $\vec{x} \in \kktsubgraph{G''}{c}$.
\end{proof}
Theorem~\ref{thm:tool} offers an early indication that elements of $\gkkt{c}$ reflect the properties of induced subgraphs of $G$, and that these elements are also in $\kkt{c}$ only if additional conditions depending on the overall structure of $G$ are met. See \cite{gibbons_continuous_1997,bomzeRegularityDegeneracyDynamics2002,tang_maxima_2022} for additional structural information related to local optima of the Motzkin-Straus QP.

It is possible to describe how $\kkt{c}$ and $\gkkt{c}$ are affected by replacing the graph $G$ appearing in Program~\eqref{prog:ms-c} with the complement graph of $G$. 
This kind of replacement was considered also in \cite{motzkin_straus_1965,pardalos_global_1990,bomze_1997}, and is related to the well-known property that a maximum clique for a given graph is, for its complement graph, a maximum independent set \cite{diestel_2005_graph_theory}. Let $\overline{G}$ denote the complement graph of $G$.
\begin{proposition}\label{prop:complement}
    Consider the \emph{minimization} program
    \begin{mini}|l|
        {\vec{x} \in \Delta_n}
        {\QuadForm{[\mtx{A} + (1 - c)\mtx{I}]}{\vec{x}}.} 
        {\label{prog:ms-c-min}} 
        { }
    \end{mini}
\begin{mylist}
    \item \label{i:complement:kkt} $\kktsubgraph{\overline{G}}{c}$ coincides
    with the set of KKT points for Program~\eqref{prog:ms-c-min};
    \item \label{i:complement:gkkt} $\gkktsubgraph{\overline{G}}{c} = \gkktsubgraph{G}{1 - c}$.
\end{mylist}
\end{proposition}
\begin{proof}
    \ref{i:complement:gkkt}: Recalling that $\mtx{A} = \mtx{A}_G$, let $\mtx{B} = \mtx{A}_{\overline{G}}$ and let $\vec{x} \in \Delta_n$. The identity $\mtx{A} + \mtx{B} + \mtx{I} = \vec{1}\vec{1}\!^{\top}$
    implies that
    \begin{equation*}
        \mtx{B}\vec{x} + c \vec{x} =  \vec{1} - \mtx{A}\vec{x} + (c - 1) \vec{x},
    \end{equation*}
    and so 
    \begin{equation}\label{eq:swap}
        (\mtx{B} \vec{x} + c \vec{x})_i = 1 - (\mtx{A}\vec{x} + (1 -c ) \vec{x})_i \qquad \text{for every } i \in V.
    \end{equation}      
    Consequently, $\vec{x} \in \gkktsubgraph{G}{1 - c}$ if and only if $\vec{x} \in \gkktsubgraph{\overline{G}}{c}$.
    
    \ref{i:complement:kkt}: Note that $\vec{x} \in \kktsubgraph{\overline{G}}{c}$ if and only if for some $\lambda \in \bbR$
    \begin{equation*}
        \begin{cases}
            (\mtx{B} \vec{x} + c \vec{x})_i = \lambda       & \quad\text{for every  $i \in \supp(\vec{x})$}, \\
            (\mtx{B} \vec{x} + c \vec{x})_i \leq \lambda    & \quad\text{for every  $i \in V \setminus \supp(\vec{x})$,}
        \end{cases}
    \end{equation*}
    i.e., by Eq.~\eqref{eq:swap}, if and only if for some $r \in \bbR$
    \begin{equation}\label{eq:kkt-min}
        \begin{cases}
            (\mtx{A}\vec{x} + (1 -c ) \vec{x})_i= r     & \quad\text{for every  $i \in \supp(\vec{x})$}, \\
            (\mtx{A}\vec{x} + (1 -c ) \vec{x})_i\geq r  & \quad\text{for every  $i \in V \setminus \supp(\vec{x})$,}
        \end{cases}
    \end{equation}
    and note that Eq.~\eqref{eq:kkt-min} is equivalent to the KKT conditions for Program~\eqref{prog:ms-c-min}.
\end{proof}
Proposition~\ref{prop:complement} has been inspired by \cite{motzkin_straus_1965}, in which a similar idea is used to show that the Motzkin-Straus QP is equivalent to a certain minimization program.

We remark that Proposition~\ref{prop:complement}.\ref{i:complement:gkkt} applied for $c = \frac{1}{2}$ gives
\begin{equation*}
    \gkktsubgraph{\overline{G}}{\frac{1}{2}} = \gkktsubgraph{G}{\frac{1}{2}},
\end{equation*}
a curious equality involving on both sides the QP studied in \cite{bomze_1997}.
\section{Characteristic Vectors}\label{sec:char_vec}
\begin{definition}
    Let $S$ be a non-empty subset of $V$. We call \emph{characteristic vector}%
        \footnote{Some authors adopted a different terminology. For instance, the vector that we denote by $\vec{x}^{S}$ is called in \cite{bomze_1997} the barycenter of the face of $\Delta_n$ corresponding to $S$, a terminology providing more geometrical insight.}
    representing $S$ in $\Delta_n$ the vector $\vec{x}^S \in \Delta_n$ defined by
    \begin{equation*}
        x_i^S =
        \begin{cases}
          \card{S}^{-1}     & \quad\text{if $i \in S$},\\
          0                 & \quad\text{otherwise}.
        \end{cases}
    \end{equation*}
\end{definition}
Characteristic vectors representing maximum (resp. maximal) cliques emerge as global (resp. local) solutions to Program~\eqref{prog:ms}, see e.g. \cite{motzkin_straus_1965,pelillo_jagota_1996,tang_maxima_2022}. Indeed, the injection mentioned in Sect.~\ref{sec:intro} maps every non-empty $S \subseteq V$ to its corresponding characteristic vector. We remark that $\vec{x}^S$ may be in $\kkt{c}$ even if $S$ is not a clique, i.e., even if the graph $G[S]$ is not complete. What is known, however, is that if $\vec{x}^S$ is in $\gkkt{c}$, then the graph $G[S]$ is necessarily regular, and this is independent of the value of $c$.
\begin{proposition}\label{prop:regular-bomze}
    Let $\vec{x}$ be a characteristic vector. Then $\vec{x}\in \gkkt{c}$ if and only if $G[\supp(\vec{x})]$ is regular.
\end{proposition}
\begin{proof}
    Set $S = \supp(\vec{x})$, so that $\vec{x} = \vec{x}^{S}$.
    For every $i \in S$, call $d_i$ be the number of neighbors of $i$ in $S$. Then $(\mtx{A} \vec{x}^S)_i = d_i  \card{S}^{-1}$, thus $((\mtx{A} + c \mtx{I})\vec{x}^S)_i = (d_i + c) \card{S}^{-1}$. By Proposition~\ref{prop:kkt}, the vector $\vec{x}^S$ is in $\gkkt{c}$ if and only if for some $\lambda \in \bbR$ the equality $(d_i + c)  \card{S}^{-1} = \lambda$ holds for every $i \in S$. This is possible if and only if $d_i$ has the same value for every $i \in S$, i.e., if and only if $G[S]$ is regular.  
\end{proof}
Proposition~\ref{prop:regular-bomze} is proved in \cite{bomze_1997} for the case $c = \frac{1}{2}$, and in \cite{bomze_annealed_2002} it is mentioned that the same proof works for $0 \leq c \leq 1$.
We can say more in the following generalization of \cite[Proposition~6]{bomze_annealed_2002}:
\begin{proposition}\label{prop:two-entail-all}
    Let $\vec{x} \in \Delta_n$ and suppose two distinct $c_1$, $c_2 \in \bbR$ exist such that 
    $\vec{x} \in \gkkt{c_j}$ for $j = 1$, $2$. Then $\vec{x} = \vec{x}^{S}$ for some $S \subseteq V$ and $G[S]$ is regular.
\end{proposition}
\begin{proof}
    Set $S = \supp(\vec{x})$. By Proposition~\ref{prop:kkt}, there exist $\lambda_1$, $\lambda_2 \in \bbR$ such that for $j = 1$, $2$, the equality $((\mtx{A} + c_{j} \mtx{I})\vec{x})_i = \lambda_j$ holds for every $i \in S$. Therefore, \emph{every} non-zero component of $\vec{x}$ equals $(\lambda_1 - \lambda_2)(c_1 - c_2)^{-1}$. This entails $\vec{x} = \vec{x}^S$, hence $G[S]$ is regular by Proposition~\ref{prop:regular-bomze}.
\end{proof}

By Proposition~\ref{prop:two-entail-all}, a characteristic vector is in $\gkkt{\gamma}$ either for every value of $\gamma \in \bbR$ or for no value of $\gamma \in \bbR$.
By contrast, a rather different behavior can be observed for elements of $\Delta_n$ that are not characteristic vectors, and the general element of $\Delta_n$ is of this type, considering that exactly $2^{n} - 1$ elements of $\Delta_n$ are characteristic vectors.
\begin{proposition}\label{prop:one-at-most}
    Let $\vec{x} \in \Delta_n$ and suppose $\vec{x}$ is \emph{not} a characteristic vector. Then $\vec{x} \in \gkkt{\gamma}$ for  at most \emph{one} value of $\gamma \in \bbR$.
\end{proposition}
\begin{proof}
    If there exist distinct $c_1$, $c_2 \in \bbR$ such that $\vec{x} \in \gkkt{c_j}$ for $j = 1$, $2$, then $\vec{x}$ is a characteristic vector by Proposition~\ref{prop:two-entail-all}, and this contradicts the hypothesis on $\vec{x}$.
\end{proof}
\section{Automorphisms of Induced Subgraphs}\label{sec:automorph}
The elements of $\kkt{c}$ need not be characteristic vectors. For instance, suppose $G$ is the graph on the set of nodes $\set{1,2,3}$ and edges $\set{\set{1,3}, \set{2,3}}$, sometimes called the \emph{cherry graph} (Fig.~\ref{fig:cherry}). It is easy to check that the point $\tilde{\vec{x}} = \big(\frac{1}{4}, \frac{1}{4}, \frac{1}{2}\big)^{\top}$ --- clearly not a characteristic vector --- is an element of $\kkt{0}$, as well as an instance of spurious solution for the Motzkin-Straus QP \cite{pardalos_global_1990}.

\begin{figure}[t] 
\centering
\includegraphics{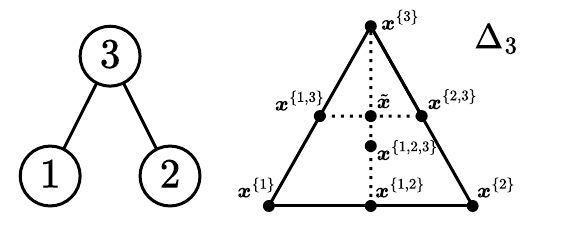} 
\caption{The cherry graph, characteristic vectors representing non-empty subsets of $\set{1,2,3}$ in $\Delta_3$, and the spurious solution $\tilde{\vec{x}}$}\label{fig:cherry}
\end{figure}

To introduce our next result, note that both the vector $\tilde{\vec{x}}$ and the cherry graph are preserved if node $1$ and node $2$ are exchanged. To be rigorous, call $\sigma$ the permutation on $\set{ 1, 2, 3 }$ swapping $1$ and $2$. Then $\sigma$ is an automorphism for the cherry graph and at the same time $\tilde{\vec{x}}$ is invariant under the pull-back by $\sigma$, i.e., the vector $\tilde{\vec{x}}$ is preserved if its $i$-th coordinate is replaced with its $\sigma(i)$-th coordinate for every $i \in V$. Indeed, this is an instance of a more general fact.

\begin{lemma}\label{lem:symmetrize}
    Let $\vec{x} \in \gkkt{c}$, set $S = \supp(\vec{x})$ and let $\mathcal{G}$ be a group of automorphisms for the induced subgraph $G[S]$. Then there exists a point
    $\hat{\vec{x}} \in \gkkt{c}$ such that $\supp(\hat{\vec{x}}) = S$ and such that $\hat{x}_{\sigma(i)} = \hat{x}_i$ for every $i \in S$ and every $\sigma \in \mathcal{G}$.
\end{lemma}
\begin{proof}
    For every $\sigma \in \mathcal{G}$, denote by $\sigma^* \vec{x}$ the vector in $\Delta_n$ defined by%
        \footnote{The reader may note a subtle abuse of notation for the pull-back: we are identifying $\sigma$, which is a permutation on $S$, and the permutation on $V$ extending $\sigma$ to $V$ so that it keeps fixed every node in $V \setminus S$.}
    \begin{equation*}
        (\sigma^* \vec{x})_i =
        \begin{cases}
        x_{\sigma(i)} &\quad\text{if $i \in S$},\\
        0 &\quad\text{otherwise.}
        \end{cases}
    \end{equation*}
    It suffices to prove that $\hat{\vec{x}} = \card{\mathcal{G}}^{-1} \sum_{\sigma \in \mathcal{G}} \sigma^*\vec{x}$ satisfies the desired properties. Note that $\hat{\vec{x}} \in \Delta_n$ by convexity of $\Delta_n$, and that $\supp(\sigma^* \vec{x})= S$ for every $\sigma \in \mathcal{G}$, thus $\supp(\hat{\vec{x}}) = S$ by construction.
    By hypothesis on $\vec{x}$, some $\lambda$ exists such that $((\mtx{A} + c \mtx{I}) \vec{x})_i = \lambda$ for every $i \in S$. Then, using that $a_{ij} = a_{\sigma(i) \sigma(j)}$ holds for every $\sigma \in \mathcal{G}$ and every $i$, $j \in S$, we get
    \begin{align*}
        ((\mtx{A} + c \mtx{I}) \sigma^* \vec{x})_i
        &= \Big( \sum_{j \in S} a_{ij} x_{\sigma(j)}\Big) + c x_{\sigma(i)}\\
        &= \Big(\sum_{j \in S} a_{\sigma(i)\sigma(j)} x_{\sigma(j)}\Big) + c x_{\sigma(i)}\\
        &= ((\mtx{A} + c \mtx{I}) \vec{x})_{\sigma(i)} = \lambda,
    \end{align*}
    and so
    \begin{equation*}
        ((\mtx{A} + c \mtx{I}) \hat{\vec{x}})_i
        = \dfrac{1}{\card{\mathcal{G}}} \sum_{\sigma\in \mathcal{G}} 
                ((\mtx{A} + c \mtx{I}) \sigma^* \vec{x})_i
                = \lambda,
    \end{equation*}
    showing that $\hat{\vec{x}} \in \gkkt{c}$. Finally, let $\tau \in \mathcal{G}$. By hypothesis $\mathcal{G}$ is a group, hence $\mathcal{G} \tau =  \mathcal{G}$, therefore
    \begin{equation*}        
    \tau^{*} \hat{\vec{x}} 
    = \dfrac{1}{\card{\mathcal{G}}} \sum_{\sigma\in \mathcal{G}} \tau^{*}( \sigma^{*} \hat{\vec{x}})
    = \dfrac{1}{\card{\mathcal{G}}} \sum_{\sigma\in\mathcal{G}} (\sigma \tau)^{*}\hat{\vec{x}}
    = \dfrac{1}{\card{\mathcal{G}}} \sum_{\sigma\in \mathcal{G}}
    \sigma^{*} \hat{\vec{x}}
    = \hat{\vec{x}}.
    \end{equation*} 
\end{proof}
Lemma~\ref{lem:symmetrize} will be central in our next result, which relies also on the following definition:
\begin{definition}\label{def:induced-partition}
    Given $\vec{x} \in \Delta_n$, define on $\supp(\vec{x})$ the equivalence relation $\sim_{\vec{x}}$ such that
    \begin{equation*}
        i \sim_{\vec{x}} j \qquad \text{if and only if } \qquad x_i = x_j.
    \end{equation*}
    The \emph{partition induced by $\vec{x}$} is the family of the equivalence classes of $\sim_{\vec{x}}$.
\end{definition}

In the sequel, for a given non-empty $S \subseteq V$ we write $\mtx{A}[S, S]$ for the \emph{principal submatrix} of $\mtx{A}$ having entries in the rows and columns of $\mtx{A}$ indexed by $S$, see \cite{horn_johnson_2013_MA}.

\begin{theorem}\label{thm:nuovo_symmetry}
    Consider a non-empty $S \subseteq V$.
    Suppose there exists $\vec{x} \in \kkt{c}$ such that $S = \supp(\vec{x})$ and suppose  $-c$ is \emph{not} an eigenvalue of $\mtx{A}[S,S]$.
    Then every class of the partition induced by $\vec{x}$ is invariant under \emph{every} automorphism of $G[S]$.
\end{theorem}
\begin{proof}
    The thesis is that $x_{\sigma(i)} = x_i$ for every $i \in S$ and every automorphism $\sigma$ of $G[S]$.
    Let $\sigma$ be an automorphism of $G[S]$ and let $\mathcal{G}$ be the group of automorphisms generated by $\sigma$. By Lemma~\ref{lem:symmetrize}, there exists $\hat{\vec{x}} \in \gkkt{c}$ satisfying $\supp(\hat{\vec{x}}) = S$ and $\hat{x}_{\sigma(i)} = \hat{x}_i$ for every $i \in S$. 
    By hypothesis on $c$, there exists a unique $\vec{z} \in \bbR^n$ such that
    \begin{equation*}
        \begin{cases}
            (\mtx{A}\vec{z})_i + cz_i = 1 &\quad\text{for every }i \in S,\\
            z_i = 0 &\quad\text{for every }i \in V \setminus S,\\
        \end{cases}
    \end{equation*}
    and by Proposition~\ref{prop:kkt} every element of $\gkkt{c}$ with support equal to $S$ is necessarily a multiple of $\vec{z}$. Since $\vec{z}$ admits at most one multiple in $\Delta_n$, this means that $\vec{x} = \hat{\vec{x}}$.
\end{proof}

Given a non-empty $S \subseteq V$, Theorem~\ref{thm:nuovo_symmetry} may provide some information about the orbits of the automorphisms of $G[S]$.
Specifically, \emph{if} $-c$ is not an eigenvalue of $\mtx{A}[S,S]$ \emph{and} we are able to find some $\vec{x} \in \gkkt{c}$ with $\supp(\vec{x}) = S$, \emph{then} Theorem~\ref{thm:nuovo_symmetry} states that for every automorphism $\sigma$ of $G[S]$ the partition on $S$ given by $\set{\set{\sigma^k (i) : k \in \bbZ} : i \in S}$ --- namely, the collection of the orbits under $\sigma$ ---  is a refinement of the partition induced by $\vec{x}$.
For $\card{S} = 1$, $2$ note that $G[S]$ is regular, hence by Proposition~\ref{prop:regular-bomze} there exists $\vec{x} \in \gkkt{c}$ with $\supp(\vec{x}) = S$ regardless of the value of $c$.
However, for $\card{S} \geq 3$ the structure of $G[S]$ may be more complicated, and such an element of $\gkkt{c}$ may not exist.
\begin{proposition}\label{prop:support-c-0-1}
    Suppose three distinct nodes $i_1$, $i_2$, $i_3 \in V$ satisfy $i_1 \not \sim i_3$ and $i_2 \sim i_3$. Let $S' \subseteq V \setminus \set{i_1, i_2, i_3}$ be such every neighbor of $i_1$ in $S'$ is also a neighbor of $i_2$. Let $S = S' \cup \set{i_1, i_2, i_3}$.
    \begin{mylist}
        \item \label{it:cherry}If $i_1 \sim i_2$, then no element of $\gkkt{1}$ has support equal to $S$;
        \item \label{it:cherry-bar} If $i_1 \not \sim i_2$, then no element of $\gkkt{0}$ has support equal to $S$.
    \end{mylist}
\end{proposition}
\begin{proof}
    Let $\mtx{M}\in \bbR^{n \times n}$ such that $\mtx{M} = \mtx{A} + \gamma \mtx{I}$ for some $\gamma \in \bbR$.
    Write $\mtx{M} = [m_{ij}]_{i,j}$ and note that $m_{ij} = a_{ij}$ for every distinct $i$, $j \in S$, hence $m_{{i_2} j} - m_{{i_1} j} = a_{{i_2} j} - a_{{i_1} j} \geq 0$ for every $j \in S'$ by hypothesis on $S'$, and the inequality is strict for $j = i_3$, whereas
    \begin{equation*}
    m_{{i_2} i_1} - m_{{i_1} i_1} =  a_{i_1 i_2} - \gamma \qquad \text{and} \qquad m_{{i_2} i_2} - m_{{i_1} i_2} = \gamma - a_{i_1 i_2}.
    \end{equation*}
    \ref{it:cherry}: Suppose $\gamma = 1$ and $i_1 \sim i_2$. Then $a_{i_1 i_2}=1$ and $m_{{i_2} j} - m_{{i_1} j} = 0$ for $j = i_1$, $i_2$.
    Suppose now some $\vec{x} \in \gkkt{1}$ satisfies $\supp(\vec{x}) = S$. By  
    Proposition~\ref{prop:kkt}, every $i \in \supp(\vec{x})$ yields the same value for the expression $\sum_{j \in V} m_{ij} x_j$, hence: 
    \begin{equation*}
         0 
         =\sum_{j \in V} m_{i_2j} x_j - \sum_{j \in V} m_{i_1j} x_j
         =\sum_{j \in S} (m_{{i_2} j} - m_{{i_1 j}}) x_j \geq (m_{{i_2} i_3} - m_{{i_1} i_3 }) x_{i_3} >0, 
    \end{equation*}
    and this is absurd.

    \ref{it:cherry-bar}: This time, suppose $\gamma = 0$ and $i_1 \not\sim i_2$. As before $m_{{i_2} j} - m_{{i_1} j} = 0$ for $j = i_1$, $i_2$, and a vector $\vec{x} \in \gkkt{0}$ such that $\supp(\vec{x}) = S$ leads to a contradiction.
\end{proof}
The two conclusions of Proposition~\ref{prop:support-c-0-1} are indeed equivalent in light of Proposition~\ref{prop:complement}.
Observe that in Proposition~\ref{prop:support-c-0-1} the graph  $G[S]$ is isomorphic to either the cherry graph or its complement graph in case $S' = \emptyset$.

Even though it is possible that no element of $\gkkt{c}$ has support equal to $S$, the next proposition shows that a different scenario occurs if $\abs{c}$ is sufficiently large.

\begin{proposition}\label{prop:support-c-big}
    Let $S$ be a non-empty subset of $V$. There exists a bounded interval $I \subset \bbR$ such that if $c \notin I$ then at least an element of $\gkkt{c}$ has support equal to $S$.
\end{proposition}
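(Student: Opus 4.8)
The plan is to translate membership in $\gkkt{c}$ with prescribed support into a linear-algebra problem via Proposition~\ref{prop:kkt} and then to push $\card{c}$ to infinity. By that proposition, a point $\vec{x} \in \Delta_n$ with $\supp(\vec{x}) = S$ belongs to $\gkkt{c}$ exactly when the entries $((\mtx{A}+c\mtx{I})\vec{x})_i$ are all equal for $i \in S$ (statement~1, which is equivalent to statement~2 and hence to membership). Since $x_j = 0$ for $j \notin S$, these entries depend only on the restriction $\vec{y} \in \bbR^{S}$ of $\vec{x}$ to $S$ and on the principal submatrix $\mtx{A}_S$ of $\mtx{A}$ indexed by $S$, so the condition reads $(\mtx{A}_S + c\mtx{I}_S)\vec{y} = \lambda \vec{1}_S$ for some $\lambda \in \bbR$, to be solved together with $\vec{1}_S^\top \vec{y} = 1$ and, crucially, $\vec{y} > \vec{0}$ (which is what forces the support to be all of $S$ rather than a proper subset). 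Producing an element of $\gkkt{c}$ with support $S$ is thus the same as exhibiting a strictly positive solution of this system.

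First I would note that as soon as $\card{c}$ exceeds the spectral radius of $\mtx{A}_S$, each eigenvalue $c + \mu$ of $\mtx{A}_S + c\mtx{I}_S$ is nonzero, so the matrix is invertible. I may then set $\vec{w}(c) = (\mtx{A}_S + c\mtx{I}_S)^{-1}\vec{1}_S$ and, whenever $\vec{1}_S^\top \vec{w}(c) \neq 0$, normalize to $\vec{y}(c) = \vec{w}(c) / (\vec{1}_S^\top \vec{w}(c))$; this $\vec{y}(c)$ automatically satisfies $\vec{1}_S^\top \vec{y}(c) = 1$ and the equal-entries condition with common value $\lambda = 1 / (\vec{1}_S^\top \vec{w}(c))$.

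The heart of the argument is the behaviour as $c \to \pm\infty$. Writing $\mtx{A}_S + c\mtx{I}_S = c(\mtx{I}_S + c^{-1}\mtx{A}_S)$ gives $c\,(\mtx{A}_S + c\mtx{I}_S)^{-1} = (\mtx{I}_S + c^{-1}\mtx{A}_S)^{-1} \to \mtx{I}_S$, whence $c\,\vec{w}(c) \to \vec{1}_S$ and $c\,\vec{1}_S^\top\vec{w}(c) \to \card{S} \neq 0$. Taking the ratio, $\vec{y}(c) \to \tfrac{1}{\card{S}}\vec{1}_S$, the barycenter $\vec{x}^S$ of the face $\Delta_n(S)$, whose entries are all strictly positive. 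By continuity there is $N > 0$ such that for every $c$ with $\card{c} > N$ the matrix is invertible, the denominator $\vec{1}_S^\top\vec{w}(c)$ is nonzero, and $\vec{y}(c) > \vec{0}$. Extending $\vec{y}(c)$ by zeros off $S$ gives a point $\vec{x}(c) \in \relint(\Delta_n(S))$, so $\supp(\vec{x}(c)) = S$; by construction the entries $((\mtx{A}+c\mtx{I})\vec{x}(c))_i$ with $i \in S$ are all equal, so Proposition~\ref{prop:kkt} yields $\vec{x}(c) \in \gkkt{c}$. Taking $I = [-N, N]$ proves the claim.

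I expect the only genuinely delicate point to be the strict positivity $\vec{y}(c) > \vec{0}$: invertibility and the normalization follow at once from the spectral estimate, but the requirement $\supp(\vec{x}) = S$ (rather than a strict subset) is exactly positivity, and this is what the barycentric limit secures. Intuitively, for large $\card{c}$ the diagonal block $c\mtx{I}_S$ dominates $\mtx{A}_S$ and drives the normalized solution toward the uniform vector $\vec{x}^S$, which sits in the relative interior of the face; making this rigorous through the continuity of $c \mapsto \vec{y}(c)$ on $\bbR \setminus [-N, N]$ is the crux.
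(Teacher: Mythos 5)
Your proof is correct, but it follows a genuinely different route from the paper's. You reduce membership in $\gkkt{c}$ with prescribed support to the linear system $(\mtx{A}[S,S]+c\mtx{I})\vec{y}=\lambda\vec{1}$ and solve it explicitly by inverting the matrix for $\card{c}$ larger than the spectral radius of $\mtx{A}[S,S]$, then use the Neumann-series limit $c\,(\mtx{A}[S,S]+c\mtx{I})^{-1}\vec{1}\to\vec{1}$ to show the normalized solution converges to the barycenter $\vec{x}^{S}$ and is therefore strictly positive for $\card{c}$ large --- all the steps (invertibility, nonvanishing of the normalizing factor, positivity) are justified. The paper instead argues variationally: for $c\to-\infty$ it compares $\max_{\Delta_n(S)}f_c\sim c/s$ with $\max_{\relbdy(\Delta_n(S))}f_c\sim c/(s-1)$, concludes that for $c$ sufficiently negative the maximizer of $f_c$ over the face lies in $\relint(\Delta_n(S))$ and is hence a generalized KKT point, and then handles $c\to+\infty$ by passing to the complement graph via Proposition~\ref{prop:complement}. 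Your argument treats both tails symmetrically without invoking the complement duality, and it buys more: an explicit threshold $N$ in terms of the spectral radius of $\mtx{A}[S,S]$ (precisely the sharper bound the paper relegates to a footnote citing Pavan and Pelillo), plus uniqueness of the generalized KKT point with support $S$ for such $c$, since the linear system then has a one-dimensional solution set before normalization. The paper's argument is softer --- no matrix inversion, only asymptotics of optimal values --- and showcases the structural duality between $G$ and $\overline{G}$ that the authors develop elsewhere in the article.
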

\begin{proof}
    Call $s = \card{S}$ and assume $s \geq 3$, for otherwise the proof is trivial.
    Observe that
    \begin{equation*}
    0 < \min_{\vec{x} \in \Delta_n(S)} \QuadForm{}{\vec{x}} = \dfrac{1}{s} < \dfrac{1}{s-1} = \min_{\vec{x} \in \relbdy{\Delta_n(S)}} \QuadForm{}{\vec{x}},
    \end{equation*}
    thus as $\gamma \to -\infty$ we get the asymptotic estimates
    \begin{equation*}
    \max_{\vec{x} \in \Delta_n(S)} f_{\gamma}(\vec{x}) \sim \dfrac{\gamma}{s}, \qquad
    \max_{\vec{x} \in \relbdy{\Delta_n(S)}} f_{\gamma}(\vec{x}) \sim \dfrac{\gamma}{s-1}.
    \end{equation*}
    Consequently, if $c$ is negative and with modulus sufficiently large, then the function $f_c$ restricted to $\Delta_n(S)$ admits a maximum $\vec{z} \in \relint \Delta_n(S)$. By Proposition~\ref{prop:gkkt-prog}, it follows that $\vec{z} \in \gkkt{c}$.
    In case $c$ is positive, the same idea shows that if $1 - c$ is negative and with modulus sufficiently large, then there exists some $\vec{w} \in \gkktsubgraph{\overline{G}}{1-c}$ with $\supp(\vec{w}) =S$, and the proof is concluded after observing that $\vec{w} \in \gkkt{c}$ by Proposition~\ref{prop:complement}.
\end{proof}
A tighter bound on the interval $I$ in Proposition~\ref{prop:support-c-big} can be derived from the spectral radius of $\mtx{A}[S, S]$, as shown in \cite[Theorem~1]{pavan_pelillo_2003}.
\section{Barycentric Coordinates}\label{sec:kkt_convex}
Note that for a family of non-empty and pairwise disjoint subsets of $V$, the corresponding characteristic vectors are linearly independent.
This enables us to introduce a convenient representation of points of $\Delta_n$ based on barycentric coordinates, a type of coordinate system for simplices that is widely employed in finite element method and computer graphics, see e.g.  \cite{quarteroni_2017_NMfDP,hormann_2017_GBC}.

\begin{definition}\label{def:y}
    Consider a tuple $\tuplenum = (V_1, V_2, \dots, V_k)$ of non-empty and pairwise disjoint subsets of $V$. Given $\vec{x}\in \conv({\vec{x}}^{V_1}, {\vec{x}}^{V_2}, \dots, {\vec{x}}^{V_k})$, the \emph{vector of barycentric coordinates}%
    \footnote{We introduce here an abuse of language. Strictly speaking, the entries of $\bary_{\tuplenum} (\vec{x})$ constitute the barycentric coordinates of $\vec{z}$ with respect to the vectors $\vec{x}^{V_1}$,  ${\vec{x}}^{V_2}$, $\dots$, ${\vec{x}}^{V_k}$.}
    of $\vec{x}$ with respect to $\tuplenum$ is the unique 
    vector $\vec{y} = \bary_{\tuplenum} (\vec{x})$ in $\Delta_k$ such that $\vec{x} = \sum_{\ell = 1}^{k} y_{\ell} \vec{x}^{V_{\ell}}$.
\end{definition}
We remark that in the setting of Definition~\ref{def:y} we must have $x_i = x_j = y_{\ell}{\card{V_{\ell}}}^{-1}$ in case $i$, $j \in V_{\ell}$. A trivial example of barycentric coordinates is obtained considering $\tuplenum =( \set{1}, \set{2}, \dots, \set{n})$, for which $\bary_{\tuplenum}(\vec{x}) = \vec{x}$ for every $\vec{x} \in \Delta_n$.

Barycentric-coordinates notation will be combined with another concept that qualifies a particular way to partition the support of an element of $\Delta_n$. 
\begin{definition}\label{def:k-constant}
    Let $\vec{x}\in \Delta_n$. A partition $\mathcal{P}$ of $\supp(\vec{x})$ \emph{separates distinct values} of $\vec{x}$ if for every $i$, $j \in \supp(\vec{x})$ such that $x_i \neq x_j$ the nodes $i$ and $j$ belong to distinct classes of $\mathcal{P}$.
\end{definition}
In other words, a partition $\mathcal{P}$ of $\supp(\vec{x})$ separates distinct values of $\vec{x}$ if and only if $\mathcal{P}$ is a refinement of the partition induced by $\vec{x}$ described in Definition~\ref{def:induced-partition}.
Moreover, we remark that in the setting of Lemma~\ref{lem:symmetrize} the orbits of $\supp(\hat{\vec{x}})$ under the action of $\mathcal{G}$ constitute a partition of $\supp(\hat{\vec{x}})$ separating distinct values of $\hat{\vec{x}}$.

We now aim to discuss how a partition as in Definition~\ref{def:k-constant} is related to barycentric coordinates. This requires the following result:
\begin{proposition}\label{prop:hull}
    Consider a family $\set{V_1, V_2, \dots, V_k}$ of non-empty and pairwise disjoint subsets of $V$.
    Then
    \begin{equation*}
        \conv({\vec{x}}^{V_1}, {\vec{x}}^{V_2}, \dots, {\vec{x}}^{V_k}) 
        = \Delta_n \cap \linspan({\vec{x}}^{V_1}, {\vec{x}}^{V_2}, \dots, {\vec{x}}^{V_k}).
    \end{equation*}
\end{proposition}
\begin{proof}
    The vectors ${\vec{x}}^{V_1}, {\vec{x}}^{V_2}, \dots, {\vec{x}}^{V_k}$ are elements of $\linspan({\vec{x}}^{V_1}, {\vec{x}}^{V_2}, \dots, {\vec{x}}^{V_k} )$ and of $\Delta_n$, which are convex sets, hence $\conv({\vec{x}}^{V_1}, {\vec{x}}^{V_2}, \dots, {\vec{x}}^{V_k})$ is included in their intersection. The trivial inclusion $\conv({\vec{x}}^{V_1}, {\vec{x}}^{V_2}, \dots, {\vec{x}}^{V_k})
    \subseteq \Delta_n \cap \linspan({\vec{x}}^{V_1}, {\vec{x}}^{V_2}, \dots, {\vec{x}}^{V_k})$ is thus proved.
    
    To prove the reversed inclusion, consider some real coefficients $a_1$, $a_2$, $\dots$, $a_n$ such that  $\vec{x} = \sum_{\ell = 1}^{k} a_{\ell} {\vec{x}}^{V_\ell}$ is  an element of $\Delta_n$. For every $\ell \in \intupto{k}$, the hypotheses on $V_1$, $V_2$, $\dots$, $V_k$ entail the existence of some $i \in V_{\ell} \setminus \left(\cup_{m \neq \ell} V_m \right)$, and so $a_{\ell} \card{V_{\ell}}^{-1} = x_i \geq 0$, thus $a_{\ell} \geq 0$.
    Moreover, by $\vec{x}\in \Delta_n$ we obtain
    \begin{equation*}
        1 = \sum_{i = 1}^{n} x_i = \sum_{i = 1}^{n} \Big( \sum_{\ell = 1}^{k} a_{\ell} {\vec{x}}^{V_\ell}  {\Big)}_{\!i}
        = \sum_{\ell = 1}^{k} \sum_{i = 1}^{n} ( a_{\ell} {\vec{x}}^{V_\ell} )_i 
        = \sum_{\ell = 1}^{k} a_{\ell}.
    \end{equation*}
    But then $(a_1, a_2, \dots, a_k)^{\top} \in \Delta_k$, hence $\vec{x} \in \conv({\vec{x}}^{V_1}, {\vec{x}}^{V_2}, \dots, {\vec{x}}^{V_k})$.
\end{proof}
Choose now $\vec{x}\in \Delta_n$ and consider a partition $\mathcal{P}=\set{ V_1, V_2, \dots, V_k}$ of $\supp(\vec{x})$ separating distinct values of $\vec{x}$. Then $\vec{x} \in \linspan({\vec{x}}^{V_1}, {\vec{x}}^{V_2}, \dots, {\vec{x}}^{V_k})$, and by Proposition~\ref{prop:hull} it follows that $\vec{x}\in \conv({\vec{x}}^{V_1}, {\vec{x}}^{V_2}, \dots, {\vec{x}}^{V_k})$. Therefore, it makes sense to consider some barycentric coordinates of $\vec{x}$ associated with $\mathcal{P}$, that in general depend on how the elements of $\mathcal{P}$ are ordered.
\begin{definition}\label{def:enumeration}
    Given a family $\mathcal{F} \subseteq 2^{V}$, we call \emph{enumeration} of $\mathcal{F}$ any tuple $\tuplenum = (V_1, V_2, \dots, V_k)$ of \emph{distinct} subsets of $V$ such that $\mathcal{F} = \set{ V_1, V_2, \dots, V_k}$.
\end{definition}

 Continuing with the previous example, if $\tuplenum = ( V_1, V_2, \dots, V_k)$ is an enumeration  of $\mathcal{P}$, then $\bary_{\tuplenum}(\vec{x})$ is a well defined element of $\Delta_k$, and it is immediate to see that $\supp(\vec{x}) \subseteq \cup_{\ell = 1}^{k} V_{\ell}$, with equality if and only if $\bary_{\tuplenum}(\vec{x}) \in\relint{\Delta_k}$.

Moreover, if $\vec{x} \in \gkkt{c}$, then $\bary_{\tuplenum}(\vec{x})$ satisfies some additional algebraic relations that depend on the structure of $G$ and that involve the graph-theoretical notion of \emph{density} \cite{diestel_2005_graph_theory}. For every non-empty $S_1$, $S_2 \subseteq V$, let $e_G(S_1, S_2)$ count the \emph{ordered}
pairs of adjacent nodes in the set $S_1 \times S_2$, i.e., let
\begin{equation*}
        e_G(S_1,S_2) = \card{\set{ (i, j) \in S_{1}\times S_{2} : i \sim j}}.
\end{equation*}
By definition, $e_G(S_1,S_2)$ provides a way to count the edges crossing $S_1$ and $S_2$, but note that every edge with both ends in $S_1 \cap S_2$ is counted twice.
Call \emph{edge density} between $S_1$ and $S_2$ the ratio
    \begin{equation*}
        d_G(S_1,S_2) = \dfrac{e_G(S_1,S_2)}{\card{S_1} \card{S_2}}.
    \end{equation*}
In particular, given a family $\mathcal{F}$ of non-empty subsets of $V$, the edge densities between all possible pairs $(S_1 ,S_2) \in \mathcal{F} \times \mathcal{F}$ can be collected into a matrix, which in a sense sketches the connectivity between elements of $\mathcal{F}$.
Once again, an enumeration for $\mathcal{F}$ allows us to establish a unique way to build this matrix.
\begin{definition}\label{def:density-mat}
    For a tuple $\tuplenum=(V_1, V_2, \dots, V_k)$ of distinct non-empty subsets of $V$, the \emph{edge density matrix} associated with $\tuplenum$ is the \emph{symmetric} matrix $\mtx{D} \in \bbR^{k \times k}$ with general coefficient $d_{\ell, m} = d_G(V_\ell, V_m)$.
\end{definition}
Suitable edge densities and barycentric coordinates can be combined to produce an alternative expression for the multiplication by $\mtx{A}$.
\begin{lemma}\label{lem:multipliers-and-densities}
    Let $\tuplenum = ( V_1, V_2, \dots, V_k)$ be a tuple of non-empty and pairwise disjoint subsets of $V$, let $\vec{x}\in \conv({\vec{x}}^{V_1}, {\vec{x}}^{V_2}, \dots, {\vec{x}}^{V_k})$ and set $\vec{y} = \bary_{\tuplenum}(\vec{x})$. Then 
    \begin{equation*}
        (\mtx{A} \vec{x})_i
        = \sum_{m=1}^k d_G(\set{i}, V_m) y_m \qquad \text{for every $i \in V$}.
    \end{equation*}
\end{lemma}
\begin{proof}
    By hypothesis $\vec{x} = \sum_{\ell} y_\ell {\vec{x}}^{V_\ell}$, hence
    \begin{align*}
        \left( \mtx{A} \vec{x} \right)_i
        &= \sum_{j=1}^n a_{i j} x_j
        = \sum_{m=1}^k \sum_{j \in V_{m}} a_{i j} x_j
        = \sum_{m=1}^k \sum_{j \in V_{m}} a_{i j} y_{m}{\card{V_m}}^{-1}\\
        &= \sum_{m=1}^k \dfrac{e_G(\set{i}, V_m)}{\card{V_m}} y_m = \sum_{m=1}^k d_G(\set{i}, V_m) y_m.
    \end{align*} 
\end{proof}
Thanks to Lemma~\ref{lem:multipliers-and-densities}, we can prove that for $\vec{x} \in \gkkt{c}$, a certain vector $\vec{y}$ of barycentric coordinates for $\vec{x}$ is a KKT point for a corresponding quadratic program.
\begin{theorem}\label{thm:reduced-dynamics}
    Let $\vec{x} \in \Delta_{n}$ and let $\mathcal{P}$ be a partition of $\supp(\vec{x})$ separating distinct values of $\vec{x}$. Let $\tuplenum = (V_1, V_2, \dots, V_k)$ be an enumeration of $\mathcal{P}$, call $\mtx{D}$ the edge density matrix associated with $\tuplenum$ and set $\mtx{\Lambda} = \diag(\card{V_1}, \card{V_2}, \dots, \card{V_k})$.
    If $\vec{x} \in \gkkt{c}$,
    then $\bary_{\tuplenum}(\vec{x})$ is a KKT point for the program
    \begin{maxi}|l|[0]
        { \vec{y} \in \relint{\Delta_k}}
        {\QuadForm{(\mtx{D} + c \mtx{\Lambda}^{-1})}{\vec{y}}.}
        { \label{prog:reduced-c}}
        { }
    \end{maxi}
\end{theorem}
\begin{proof}
    Let $\vec{y} = \bary_{\tuplenum}(\vec{x})$.
    We may write $\vec{x} = \sum_{\ell} y_\ell {\vec{x}}^{V_\ell}$ by definition of $\vec{y}$.
    By Proposition~\ref{prop:kkt} and Lemma~\ref{lem:multipliers-and-densities}, there exists $\lambda$ such that for every $i \in \supp(\vec{x})$
    \begin{equation*}
    \lambda = \left(\left(\mtx{A} + c \mtx{I}\right) \vec{x}\right)_i = \Big(\sum_{m=1}^k d_G(\set{i}, V_m) y_m \Big) + c x_i.
    \end{equation*}
    For every $\ell \in \intupto{k}$, the arithmetic mean of the previous expression as $i$ varies in $V_{\ell}$ gives
    \begin{align*}
        \lambda &= \dfrac{1}{\card{V_\ell}}\sum_{i \in V_{\ell}}  \Big[ \Big( \sum_{m=1}^k d_G(\set{i}, V_m) y_m \Big) + c x_i \Big]\\
        &= \dfrac{1}{\card{V_\ell}}
        \sum_{i \in V_{\ell}} \bigg[ \bigg( \sum_{m=1}^k e_G(\set{i}, V_m) \dfrac{y_m}{\card{V_m}}\bigg) + c \dfrac{y_{\ell}}{ \card{V_{\ell}}} \bigg]\\  
        &= \Big( \sum_{m=1}^k d_G(V_{\ell}, V_m) y_m \Big) + c  \card{V_{\ell}}^{-1} y_{\ell} \\ 
        &= \left( \left( \mtx{D} + c \mtx{\Lambda}^{-1} \right) \vec{y} \right)_{\ell}.
    \end{align*}
    Then $\vec{y}$ is a KKT point for Program~\eqref{prog:reduced-c}.
\end{proof}
We remark that Eq.~\eqref{eq:bigcup-equiv} in the proof of Proposition~\ref{prop:gkkt-prog} is a trivial consequence of Theorem~\ref{thm:reduced-dynamics} when the partition considered is $\mathcal{P} = \set{\set{i} : i \in S}$.
\section{Equitable Partitions}\label{sec:hrf}
The converse of Theorem~\ref{thm:reduced-dynamics} would be helpful in characterizing the elements of $\gkkt{c}$. Unfortunately, the implication appearing in Theorem~\ref{thm:reduced-dynamics} cannot be replaced with a double implication. In fact, suppose $G$ is the graph on $V =\intupto{4}$ depicted in Fig.~\ref{fig:counterexample},
\begin{figure}[t] 
\centering
\includegraphics[width=0.18\textwidth]{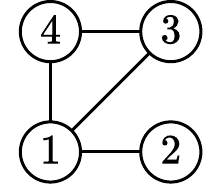} 
\caption{An instance for $G$ such that $\vec{x}^{\set{1,2,3,4}}$ is not an element of $\gkkt{c}$}\label{fig:counterexample}
\end{figure}
for which the adjacency matrix is
\begin{equation*}
    \mtx{A} = 
    \begin{pmatrix}
        0 & 1 & 1 & 1 \\
        1 & 0 & 0 & 0 \\
        1 & 0 & 0 & 1 \\
        1 & 0 & 1 & 0 
    \end{pmatrix}\!,
\end{equation*}
and consider the vector $\vec{x} = \big(\frac{1}{4}, \frac{1}{4}, \frac{1}{4}, \frac{1}{4}\big)^{\top}\!$, which by Proposition~\ref{prop:regular-bomze} is not an element of $\gkkt{c}$. For $V_1 = \set{1, 2}$ and $V_2 = \set{3,4}$, for which $\diag(V_1,V_2)^{-1} = \frac{1}{2}\mtx{I}$, the family $\mathcal{P}= \set{ V_1, V_2}$ with enumeration $\tuplenum = (V_1, V_2)$ has edge density matrix
\begin{equation*}
    \mtx{D} = 
    \dfrac{1}{2}
    \begin{pmatrix}
        1 & 1\\
        1 & 1
    \end{pmatrix},
\end{equation*}
and $\bary_{\tuplenum}(\vec{x}) = \big(\frac{1}{2}, \frac{1}{2}\big)^{\top}$ is a KKT point for the program
    \begin{maxi}|l|[0]
        { \vec{y} \in \relint{\Delta_2}}
        {\QuadForm{\left(\mtx{D} +  \dfrac{c}{2}\,\mtx{I}\right)}{\vec{y}}.}
        { \notag}
        { }
    \end{maxi}
However, Theorem~\ref{thm:reduced-dynamics} admits a partial converse under stronger hypotheses, which can be expressed using the notion of equitable partition \cite{godsil_agt_2001}.

\begin{definition}\label{def:highly}
    Given a graph $G'$ on a non-empty set of nodes $S$, a partition $\mathcal{P} = \set{V_1, V_2, \dots, V_k}$ of $S$ is called \emph{equitable} for $G'$ if for every $\ell$, $m \in \intupto{k}$ the number of neighbours in $V_{m}$ of a vertex $i$ in  $V_{\ell}$ is a constant $b_{\ell m}$, independent of $i$.
\end{definition}
Note that, if $\set{V_1, V_2, \dots, V_k}$ is an equitable partition for a graph $G'$, then in particular $V_m \neq \emptyset$ and $G'[V_{m}]$ is regular for every $\ell \in \intupto{k}$. The following proposition provides an equivalent formulation of Definition~\ref{def:highly} involving edge densities:

\begin{proposition}\label{prop:criterion}
    Given a graph $G'$ and a partition $\mathcal{P} = \set{V_1, V_2, \dots, V_k}$ of the set of nodes of $G'$, then $\mathcal{P}$ is equitable for $G'$ if and only if
    \begin{equation}\label{eq:highly-regular}
        d_{G'}(V_{\ell}, V_{m}) = d_{G'}(\set{i}, V_{m}) \qquad \text{ for every $\ell$, $m \in \intupto{k}$ and every $i \in V_{\ell}$.}
    \end{equation}
\end{proposition}
\begin{proof}
    Suppose first that $\mathcal{P}$ is an equitable partition. Fix $\ell$, $m \in \intupto{k}$ and some $i \in V_{\ell}$. Then
    \begin{equation}\label{eq:in-criterion}
        \sum_{j \in V_{\ell}} e_{G'}(\set{j}, V_m) = e_{G'}(V_{\ell}, V_{m}),
    \end{equation}
    and every term appearing in the summation is equal to $e_{G'}(\set{i}, V_{m})=b_{\ell m}$ by definition of equitable partition. Consequently, dividing both sides of Eq.~\eqref{eq:in-criterion} by $\card{V_{\ell}}\card{V_m}$ we get $d_{G'}(\set{i}, V_{m}) = d_{G'}(V_{\ell}, V_{m})$, and this value is independent of the choice of $i$ in $V_{\ell}$.
    Conversely, suppose Eq.~\eqref{eq:highly-regular} holds. Then every node in $V_{\ell}$ has $d_{G'}(V_{\ell}, V_m)\card{V_{m}}$ neighbors in $V_{m}$ for every $\ell$, $m \in \intupto{k}$, and this shows that $\mathcal{P}$ is equitable.
\end{proof}
Equitable partitions for induced subgraphs will play a key role in the remaining part of this paper.
\begin{proposition}\label{prop:highly}
    Consider some non-empty $S \subseteq V$. Then:
    \begin{mylist}
        \item\label{item:hrfe-trivial} The partition $\set{ \set{i} : i \in S}$ is equitable for $G[S]$;
        \item\label{item:hrfe-regular} The partition $\set{S}$ is equitable for $G[S]$ if and only if $G[S]$ is a regular graph;
        \item\label{item:hrfe-clique} If $S$ is \emph{not} an independent set, then \emph{every} partition of $S$ is equitable for $G[S]$ if and only if $S$ is a clique;
        \item\label{item:hrfe-auto} If $\mathcal{G}$ is a group of automorphisms for $G[S]$, then the orbits of $\mathcal{G}$ form an equitable partition for $G[S]$.
    \end{mylist}
\end{proposition}
\begin{proof}
    \ref{item:hrfe-trivial}, \ref{item:hrfe-regular}: Trivial by Definition~\ref{def:highly}. 
    
    \ref{item:hrfe-clique}: Note first that every partition of a complete graph is equitable. To prove the other implication, assume there exist two adjacent nodes $i_1$, $i_2 \in S$ and that every partition of $S$ is equitable for $G[S]$. 
    In particular, this is true for the partition $\set{ \set{ i_1 }, S \setminus \set{ i_1 } }$, and since $i_2 \sim i_1$, then $i \sim i_1$ for every $i \in S \setminus \set{ i_1 }$.
    Also $\set{ S }$ is an equitable partition for $G[S]$, thus $G[S]$ is a regular graph by \ref{item:hrfe-regular}, and so every $i \in S$ has $\card{S} - 1$ neighbors in $S$, i.e., $S$ is a clique.
    
    \ref{item:hrfe-auto}: This follows by \cite[p.~216, Exercise~2]{godsil_agt_2001}.
\end{proof}
As mentioned before, stronger assumptions enable us to strengthen the conclusions drawn in Theorem~\ref{thm:reduced-dynamics}.
\begin{theorem}\label{thm:reduced-dynamics-converse}
    Let $\vec{x} \in \Delta_{n}$ and let $\mathcal{P}$ be a partition of $\supp(\vec{x})$ separating distinct values of $\vec{x}$. Let $\tuplenum = (V_1, V_2, \dots, V_k)$ be an enumeration of $\mathcal{P}$, call $\mtx{D}$ the edge density matrix associated with $\tuplenum$ and set $\mtx{\Lambda} = \diag(\card{V_1}, \card{V_2}, \dots, \card{V_k} )$.
    Assume $\mathcal{P}$ is equitable for $G[\supp(\vec{x})]$. Then $\vec{x} \in \gkkt{c}$ if and only if 
    $\bary_{\tuplenum}(\vec{x})$ is a KKT point for Program~\eqref{prog:reduced-c}.
\end{theorem}
\begin{proof}
    One implication follows immediately by Theorem~\ref{thm:reduced-dynamics}.
    For the other implication, set $\vec{y} = \bary_{\tuplenum}(\vec{x})$ and
    assume  that $\vec{y}$ is a KKT point for Program~\eqref{prog:reduced-c}. Then $\supp(\vec{y}) = \intupto{k}$, and there exists $\lambda\in \bbR$ such that for every $\ell \in \intupto{k}$:
    \begin{equation*}
        ( ( \mtx{D}+ c \mtx{\Lambda}^{-1}) \vec{y} )_{\ell} = \lambda.
    \end{equation*}
    Pick any $i \in \supp(\vec{x})$. The node $i$ is in $V_{\ell}$ for some $\ell \in \intupto{k}$, and $d_G(\set{i}, V_m)= d_G(V_{\ell}, V_m)$ by Proposition~\ref{prop:criterion}. By Lemma~\ref{lem:multipliers-and-densities},
    \begin{equation*}        
        ((\mtx{A} + c \mtx{I} )\vec{x})_i = \Big(\sum_{m=1}^k d_G(\set{i}, V_m) y_m\Big) + c x_i = \Big(\sum_{m=1}^k d_G(V_{\ell}, V_m) y_m\Big) + c\card{V_\ell}^{-1}y_\ell = \lambda.
    \end{equation*}
    Then $\vec{x} \in \gkkt{c}$ by Proposition~\ref{prop:kkt}. 
\end{proof}
\section{Application to Equitable Bipartitions}\label{sec:binary}
Let $\mathcal{P} = \set{V_1, V_2}$ be a partition for some subset of $V$, define $\tuplenum = ( V_1, V_2 )$ and assume that $\mathcal{P}$ is an equitable partition for $G[V_1 \cup V_2]$.
Moreover, let $\mtx{D} = [d_{ij}]_{i,j}$ be the edge density matrix associated with $\tuplenum$ and set
\begin{equation*}
    \begin{cases}
        \alpha_1(\tuplenum) = \card{V_2}(d_{12}-d_{22}),\\
        \alpha_2(\tuplenum) =\card{V_1} (d_{21} - d_{11}).
        \end{cases}
\end{equation*}
\begin{proposition}\label{prop:technical}
    $G[V_1 \cup V_2]$ is a regular graph if and only if $\alpha_1(\tuplenum) = \alpha_2(\tuplenum)$.
\end{proposition}
\begin{proof}
    For $\ell = 1$, $2$, every node in $V_\ell$ has $d_{\ell 1}\card{V_1} + d_{\ell 2}\card{V_2}$ neighbors in $V_1 \cup V_2$. Then $G[V_1 \cup V_2]$ is regular if and only if $d_{11}\card{V_1} + d_{12}\card{V_2} = d_{21}\card{V_1} + d_{22}\card{V_2}$, which is equivalent to $\alpha_1(\tuplenum) = \alpha_2(\tuplenum)$.
\end{proof}

The regularity of $G[V_1 \cup V_2]$ plays a key role in how $\gkkt{c}$ intersects $[\vec{x}^{V_1}, \vec{x}^{V_2}]$.
\begin{corollary}\label{cor:case-v1-v2-regular}
    Assume $G[V_1 \cup V_2]$ is a regular graph.
    Then $c^{*} = \alpha_1(\tuplenum) = \alpha_2 (\tuplenum)$ is such that
    \begin{itemize}
        \item If $c = c^{*}$,
        then $\gkkt{c} \cap [\vec{x}^{V_1}, \vec{x}^{V_2}]  = [\vec{x}^{V_1}, \vec{x}^{V_2}]$;
        \item If $c \neq c^{*}$,
            then $\gkkt{c} \cap [\vec{x}^{V_1}, \vec{x}^{V_2}] =
            \set{ \vec{x}^{V_1}, \vec{x}^{V_2}, \vec{x}^{V_1 \cup V_2}}$.
    \end{itemize}
\end{corollary}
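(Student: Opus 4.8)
The plan is to push the whole statement through the two-variable reduction of Theorem~\ref{thm:highly-regular-separating} and then to solve the resulting scalar KKT condition by hand, treating the endpoints and the interior of the segment separately. First I would dispose of the three distinguished points. Since $\mathcal{P} = \{V_1, V_2\}$ is highly regular, both $G[V_1]$ and $G[V_2]$ are regular, so Proposition~\ref{prop:bary} yields $\vec{x}^{V_1}, \vec{x}^{V_2} \in \gkkt{c}$ for every $c$; likewise the hypothesis that $G[V_1 \cup V_2]$ is regular gives $\vec{x}^{V_1 \cup V_2} \in \gkkt{c}$ for every $c$. A one-line computation identifies $\vec{x}^{V_1 \cup V_2}$ as the point of $[\vec{x}^{V_1}, \vec{x}^{V_2}]$ whose barycentric coordinates are $\vec{y} = (\card{V_1}, \card{V_2})/(\card{V_1}+\card{V_2})$, confirming it lies in the relative interior of the segment. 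Thus these three points always belong to $\gkkt{c} \cap [\vec{x}^{V_1}, \vec{x}^{V_2}]$, and the real work is to describe the rest of the interior.

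For $\vec{x}$ in the relative interior of $[\vec{x}^{V_1}, \vec{x}^{V_2}]$ we have $\supp(\vec{x}) = V_1 \cup V_2$, and $\mathcal{P}$ separates distinct values of $\vec{x}$ since each component of $\vec{x}$ is constant on $V_1$ and on $V_2$. Being $\mathcal{P}$ highly regular, Theorem~\ref{thm:highly-regular-separating} applies, so $\vec{x} \in \gkkt{c}$ if and only if $\vec{y} = \bary_{\mathcal{P}}(\vec{x}) \in \relint(\Delta_2)$ is a KKT point for \eqref{prog:reduced-c}. Writing $d_{\ell m} = d_G(V_\ell, V_m)$ for the entries of $\mtx{D}$ and $\mtx{M} = \mtx{D} + c\mtx{\Lambda}^{-1}$, Proposition~\ref{prop:kkt} collapses this to the single equation $(\mtx{M}\vec{y})_1 = (\mtx{M}\vec{y})_2$, which I would expand to $\alpha y_1 = \beta y_2$ with $\alpha = (d_{11} - d_{12}) + c/\card{V_1}$ and $\beta = (d_{22} - d_{12}) + c/\card{V_2}$.

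The crux — and the place where the regularity of $G[V_1 \cup V_2]$ enters — is to relate $\alpha$ and $\beta$. Using the equivalent formulation of highly regular families, a vertex of $V_\ell$ has exactly $d_{\ell 1}\card{V_1} + d_{\ell 2}\card{V_2}$ neighbours inside $V_1 \cup V_2$; demanding that this degree agree for $\ell = 1$ and $\ell = 2$, which is precisely the regularity of $G[V_1 \cup V_2]$, gives $(d_{11} - d_{12})\card{V_1} = (d_{22} - d_{12})\card{V_2}$. Setting $c^{*} = -(d_{11}-d_{12})\card{V_1} = -(d_{22}-d_{12})\card{V_2}$, this identity rewrites neatly as
\begin{equation*}
    \alpha\,\card{V_1} = \beta\,\card{V_2} = c - c^{*}.
\end{equation*}

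From this single relation both regimes follow immediately. If $c = c^{*}$ then $\alpha = \beta = 0$, so $\alpha y_1 = \beta y_2$ holds for every $\vec{y} \in \relint(\Delta_2)$ and the entire interior lies in $\gkkt{c}$; adjoining the endpoints gives $\gkkt{c} \cap [\vec{x}^{V_1}, \vec{x}^{V_2}] = [\vec{x}^{V_1}, \vec{x}^{V_2}]$. If $c \neq c^{*}$ then $\alpha$ and $\beta$ are both nonzero with the common sign of $c - c^{*}$, and $\alpha y_1 = \beta y_2$ forces $y_1/\card{V_1} = y_2/\card{V_2}$, i.e. $\vec{y} = (\card{V_1}, \card{V_2})/(\card{V_1}+\card{V_2})$, singling out exactly $\vec{x}^{V_1 \cup V_2}$ in the interior; with the endpoints this produces $\{\vec{x}^{V_1}, \vec{x}^{V_2}, \vec{x}^{V_1 \cup V_2}\}$. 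I expect the only genuinely delicate point to be the degree-counting identity of the third paragraph: it is what makes the two candidate expressions for $c^{*}$ coincide, and hence what guarantees that a single threshold governs both cases.
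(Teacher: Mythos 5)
Your proposal is correct and follows essentially the same route as the paper's proof: both reduce to the two-variable KKT condition via Theorem~\ref{thm:highly-regular-separating}, use the degree count $d_{\ell 1}\card{V_1}+d_{\ell 2}\card{V_2}$ to translate regularity of $G[V_1\cup V_2]$ into the coincidence of the two candidate thresholds, and then read off the two regimes from the resulting scalar equation (your $c^{*}$ equals the paper's $\alpha=\beta$). The only cosmetic differences are that you make $\alpha,\beta$ depend on $c$ where the paper keeps them constant, and that you verify $\vec{x}^{V_1\cup V_2}\in\gkkt{c}$ directly from Proposition~\ref{prop:bary} rather than extracting it from the equation.
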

\begin{proof}
    By Proposition~\ref{prop:technical}, $\alpha_1(\tuplenum) = \alpha_2 (\tuplenum)$, hence $c^{*}$ is well defined.
    Both $\vec{x}^{V_1}$ and $\vec{x}^{V_2}$ are in $\gkkt{c}$ by Proposition~\ref{prop:regular-bomze}, and 
    by Theorem~\ref{thm:reduced-dynamics-converse} we can find the remaining elements of $\gkkt{c}$ within $[\vec{x}^{V_1}, \vec{x}^{V_2}]$ by looking for points of the form $y_1 \vec{x}^{V_1} + y_2\vec{x}^{V_2}$, where $(y_1, y_2)^{\top} \in \relint{\Delta_2}$ satisfies for some $\lambda \in \bbR$
    \begin{equation*}
        \left( \mtx{D} + c\diag\left(\card{V_1}^{-1}, \card{V_2}^{-1}\right) \right)
        \begin{pmatrix} y_1 \\ y_2 \end{pmatrix} = \begin{pmatrix}
        \lambda \\ \lambda
        \end{pmatrix}.
    \end{equation*}
    By eliminating $\lambda$, this means that
    \begin{equation*}
        (c\card{V_1}^{-1} + d_{11}) y_1 + d_{12} \, y_2 
        = d_{21}\, y_1 + (d_{22} + c\card{V_2}^{-1}) y_2,
    \end{equation*}
    which is equivalent to
    \begin{equation}\label{eq:alpha-beta}
        (c - \alpha_2(\tuplenum) )y_1 \card{V_1}^{-1}
        = (c - \alpha_1(\tuplenum) )y_2 \card{V_2}^{-1}.
    \end{equation}
    Then for $c = c^*$ every $(y_1, y_2)^{\top} \in \relint{\Delta_2}$ satisfies Eq.~\eqref{eq:alpha-beta}. For $c \neq c^*$, dividing both sides in Eq.~\eqref{eq:alpha-beta} by $c - c^{*}$ yields $y_1 \card{V_1}^{-1}
        = y_2 \card{V_2}^{-1}$, leading to the solution $\vec{x}^{V_1 \cup V_2}$.  
\end{proof}

\begin{corollary}\label{cor:case-v1-v2-not-regular}
    Assume $G[V_1 \cup V_2]$ is \emph{not} a regular graph.
    Then the closed interval $I = \conv(\alpha_1(\tuplenum), \alpha_2(\tuplenum))$ is such that:
    \begin{itemize}
        \item If $c \in I$, then $\gkkt{c} \cap [\vec{x}^{V_1}, \vec{x}^{V_2}] 
        = \set{ \vec{x}^{V_1}, \vec{x}^{V_2} }$;
        \item If $c  \not\in I$, then $\gkkt{c} \cap [\vec{x}^{V_1}, \vec{x}^{V_2}]  
        = \set{ \vec{x}^{V_1}, \vec{x}^{V_2}, \vec{x}(c)}$, where
        \begin{equation}\label{eq:x-di-c}
            \vec{x}(c) = \sum_{\ell = 1}^2 \dfrac{(c - \alpha_{\ell}(\tuplenum))|V_{\ell}|}{(c - \alpha_1(\tuplenum))|V_1| + (c - \alpha_2(\tuplenum))|V_2|} \vec{x}^{V_{\ell}}
        \end{equation}
        satisfies $\supp{\vec{x}(c)} = V_1 \cup V_2$ and $\vec{x}(c) \neq \vec{x}^{V_1 \cup V_2}$.
    \end{itemize}    
\end{corollary}
\begin{proof}
    As before, both $\vec{x}^{V_1}$ and $\vec{x}^{V_2}$ are in $\gkkt{c}$ regardless of the value of $c$ by Proposition~\ref{prop:regular-bomze}.
    However, this time $\alpha_1(\tuplenum) \neq \alpha_2 (\tuplenum)$ by Proposition~\ref{prop:technical}, and so $I$ is a closed interval with distinct endpoints.
    Let $\vec{x} \in [\vec{x}^{V_1}, \vec{x}^{V_2}]$ such that $\vec{x} \neq \vec{x}^{V_1}$, $\vec{x}^{V_2}$.
    Arguing as in the proof of Corollary~\ref{cor:case-v1-v2-regular}, then $\vec{x} \in \gkkt{c} \cap [\vec{x}^{V_1}, \vec{x}^{V_2}]$ if and only if $\vec{x} = y_1 \vec{x}^{V_1} + y_2\vec{x}^{V_2}$, where $(y_1, y_2)^{\top} \in \relint{\Delta_2}$ satisfies Eq.~\eqref{eq:alpha-beta}.
    For positive $y_1$, $y_2$ a solution to Eq.~\eqref{eq:alpha-beta} exists only in case $c \not \in I$, and if that occurs, then it is easy to check that $\vec{x} = \vec{x}(c)$.
\end{proof}
Recall that a \emph{star graph} is a complete bipartite graph in which one node, called center, is an end of every edge in the graph \cite{diestel_2005_graph_theory}.
\begin{definition}\label{def:gen_star}
    We say that a graph $G'=(V',E')$ is a \emph{generalized star} with \emph{core}%
    \footnote{The term \emph{core} has a different meaning in algebraic graph theory, see e.g. \cite[p.~104]{godsil_agt_2001}.}
    $H'$ if: 
    \begin{enumerate}[label = \textup{(GS\arabic*)},left = 0pt]
        \item \label{i:gs1} $H'$ is a proper non-empty subset of $V'$;
        \item \label{i:gs2} Every node in $H'$ is adjacent to every node in $V' \setminus H'$;
        \item \label{i:gs3} $H'$ is a clique, but $V'$ is not a clique;
        \item \label{i:gs4} The induced subgraph $G[V' \setminus H']$ is regular, but not a clique.
    \end{enumerate}
\end{definition}
\begin{proposition}
    Let $G'=(V',E')$ be a star graph with center $u \in V'$. If $\card{V'} \geq 3$, then $G'$ is a generalized star with core $\set{u}$.
\end{proposition}
\begin{proof}
    A star graph that has three or more nodes is not a clique. It is then immediate to see that Definition~\ref{def:gen_star} is satisfied.
\end{proof}
Note that the cherry graph (Fig.~\ref{fig:cherry}) is a star and admits the equitable partition $\set{\set{1,2}, \set{3}}$. In this case, $\vec{x}(c)$ given by Corollary~\ref{cor:case-v1-v2-not-regular} for $c = 0$ is precisely the spurious solution $\big(\frac{1}{4}, \frac{1}{4}, \frac{1}{2}\big)^{\top}$. Similarly, Corollary~\ref{cor:case-v1-v2-not-regular} can be applied to every star graph with at least $3$ nodes, as well as to generalized star graphs.
\begin{theorem}\label{thm:genstar}
    Let $H$, $P$ be disjoint subsets of $V$ such that $G[H \cup P]$ is a generalized star with core $H$.
    Set $h = \card{H}$, $p = \card{P}$ and assume $G[P]$ is a $d$-regular graph.
    Then $b =  p - d > 1$, and for $c \not\in [1, b]$ and 
    \begin{equation}\label{eq:gen-star-y}
    \begin{cases}        
        y_1 &= (c - 1) p \,[(c - 1)p + (c - b) h]^{-1},\\
        y_2 &= (c - b) h \,[(c - 1)p + (c - b) h]^{-1},
    \end{cases}
    \end{equation}    
    the vector $\vec{x} = y_1\vec{x}^{P} + y_2\vec{x}^{H}$ is an element of $\gkkt{c}$ with support $H \cup P$.
\end{theorem}
\begin{proof}
    By hypothesis, $\mathcal{F} = \set{H, P }$ is an equitable partition for $G[H \cup P]$, and the edge density matrix associated with $(P, H)$ is
    \begin{equation*}
        \mtx{D} =
        \begin{pmatrix}
        d/p\quad & 1\\
        1 \quad & 1 - 1/h
        \end{pmatrix}.
    \end{equation*}
    A direct computation shows that
    \begin{equation*}
        \begin{cases}
            \alpha_1((P, H)) = h[1 - (1 - h^{-1})] = 1,\\
            \alpha_2((P, H)) = p( 1 - d p^{-1}) = p - d = b.
            \end{cases}
    \end{equation*}
    Moreover, $G[P]$ is not complete by \ref{i:gs3}, thus $d < p - 1$ and so $1 < b \leq p$.
    Finally, if $c \in \bbR \setminus [1, b]$, then the thesis follows by Corollary~\ref{cor:case-v1-v2-not-regular} applied to $(V_1,V_2) = (P, H)$.
 \end{proof}

In \cite[Theorem~10]{pelillo_jagota_1996}, a configuration of cliques ${C_1}$, ${C_2}$, $\dots$, ${C_q}$ of equal cardinality is exhibited such that $\conv(\vec{x}^{C_1}, \dots, \vec{x}^{C_q})$ is entirely contained in $\gkkt{0}$. To show this, the authors prove that every point of that specific convex hull is a local solution for the Motzkin-Straus QP. Recently, this result has been generalized in \cite{tang_maxima_2022}.

As an application of Theorem~\ref{thm:genstar}, we are able to exhibit a particular configuration of cliques ${C_1}$, ${C_2}$, $\dots$, ${C_q}$ and conditions on $c$ such that at least one vector with support $\cup_{\ell} C_{\ell}$ is an element of $\gkkt{c} \setminus \conv(\vec{x}^{C_1}, \dots, \vec{x}^{C_q})$.

\begin{figure}[t] 
\centering
\includegraphics[width=0.7\textwidth]{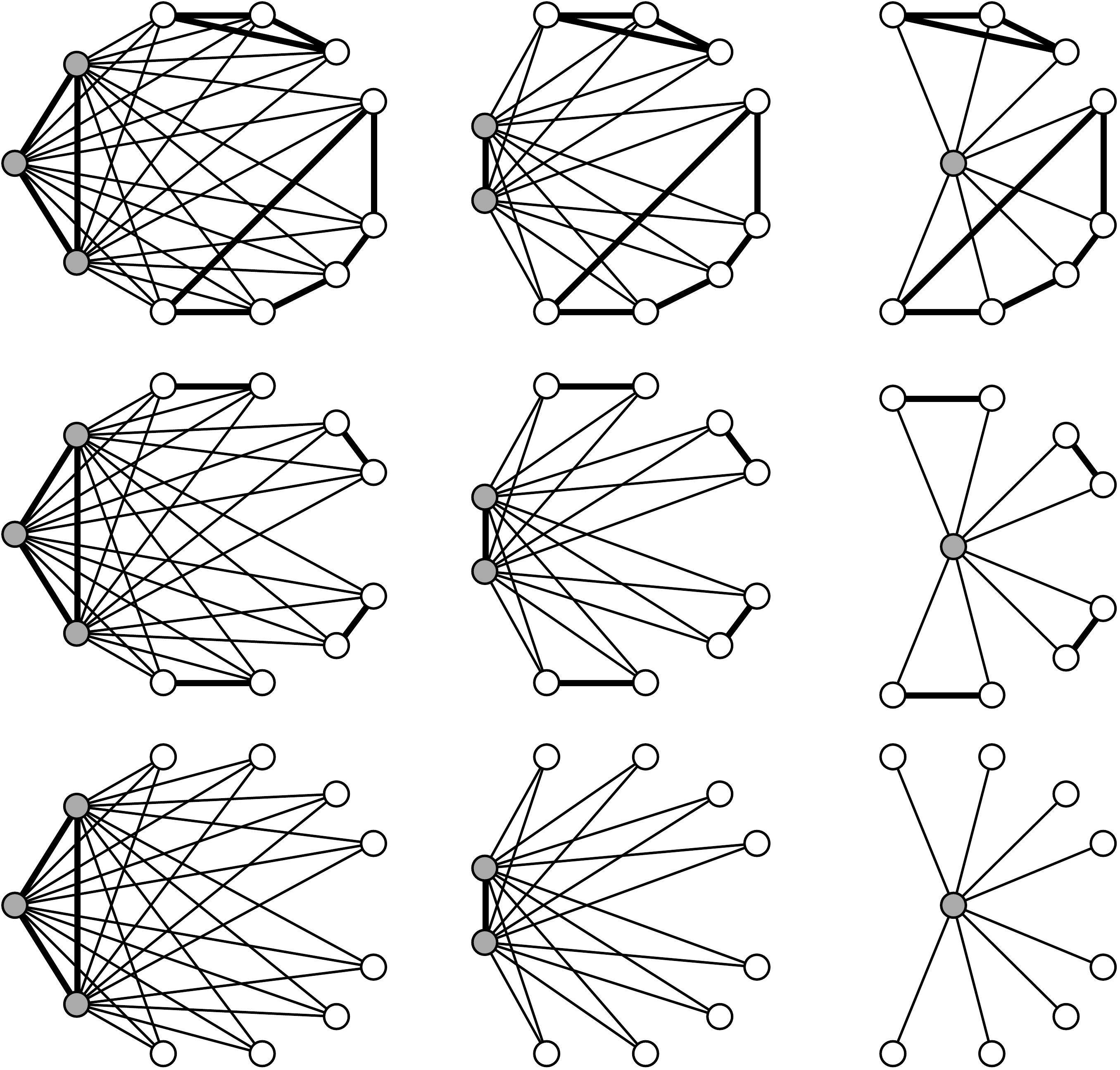} 
\caption{Some generalized stars, with core nodes represented in gray}\label{fig:flower}
\end{figure}

\begin{corollary}\label{cor:one-shared-set}
    Consider $q \geq 2$ distinct cliques $C_1$, $C_2$, $\dots$, $C_q$ such that:
    \begin{enumerate}[label = \textup{(\roman*)},left = 0pt]
        \item \label{i:shared-size} There exists an integer $s \geq 2$ such that $\card{C_{\ell}}= s$ for every $\ell \in \intupto{q}$;
        \item \label{i:shared-sunflower} The set $H = \cap_{\ell} C_\ell$ is not empty and $C_{\ell} \cap C_{m} = H$ for every distinct $\ell$, $m \in \intupto{q}$;
        \item \label{i:shared-hr} The set $P =\cup_\ell C_{\ell}\setminus H$ is not empty and $G[P]$ is regular but not complete.
    \end{enumerate}
    There exist $c_0 < 1$ and $1 < b \leq \card{P}$ such that,
    if $c \not\in \set{c_0} \cup [1, b] $, 
    then at least one vector with support $\cup_{\ell} C_{\ell}$ lies in $\gkkt{c} \setminus \conv(\vec{x}^{C_1}, \dots, \vec{x}^{C_q})$.
\end{corollary}
\begin{proof}
    Observe that $G[H \cup P]$ is a generalized star with core $H$. 
    Define $h = \card{H}$, $p = \card{P}$, and assume $G[P]$ is a $d$-regular graph.
    For $b = p - d$ and $c \in \bbR \setminus [1, b]$, define $y_1$ and $y_2$ as in Eq.~\eqref{eq:gen-star-y} so that, by Theorem~\ref{thm:genstar}, the vector $\vec{x} = y_1\vec{x}^{P} + y_2\vec{x}^{H}$ is an element of $\gkkt{c}$.
    It will be useful to note that dividing termwise the equalities in Eq.~\eqref{eq:gen-star-y} yields 
    \begin{equation}\label{eq:ratio}
        \dfrac{y_2}{h} : \dfrac{y_1}{p} = \dfrac{c-b}{c-1}.
    \end{equation}
    Now, suppose it is possible to write $\vec{x}$ as a convex combination of $\vec{x}^{C_1}$, $\vec{x}^{C_2}$, $\dots$, $\vec{x}^{C_q}$.
    In this case, there is an alternative way to compute the left-hand side of Eq.~\eqref{eq:ratio}. In fact, note that $x_i =x_{j}$ whenever $i$, $j \in P$ by construction, and due to hypothesis \ref{i:shared-size} there exists only one element of $\conv(\vec{x}^{C_1}, \dots, \vec{x}^{C_q})$ with this property, namely the arithmetic mean of $\vec{x}^{C_1}$, $\vec{x}^{C_2}$, $\dots$, $\vec{x}^{C_q}$. Therefore
    \begin{equation*}
    \vec{x} = \dfrac{1}{q} \sum_{\ell = 1}^q\vec{x}^{C_{\ell}}, 
    \end{equation*}
    hence
    \begin{equation*}
        x_i = 
        \begin{cases}
            q^{-1}s^{-1}    & \quad\text{$i \in P$,}\\
            s^{-1}          & \quad\text{$i \in H$,}\\
            0               & \quad\text{otherwise.}
        \end{cases}
    \end{equation*}
    Consequently, $y_1 = p q^{-1}s^{-1}$ and $y_2 = h s^{-1}$ and so
    \begin{equation}\label{eq:ratio-II}
        \dfrac{y_2}{h} : \dfrac{y_1}{p} = q.
    \end{equation}
    By Eq.~\eqref{eq:ratio} and Eq.~\eqref{eq:ratio-II} we get $(c - b) ( c - 1 )^{-1} = q$, i.e., $c = (q - b) (q - 1)^{-1}$.
    To complete the proof, define $c_0 = (q - b) (q - 1)^{-1}$ and note that $b \geq 2$ implies $c_0 < 1$.  
\end{proof}
\section{Replicator Dynamics}\label{sec:rd}
Consider a matrix $\mtx{M} \in \bbR^{n \times n}$ and the associated ordinary differential equation
\begin{equation}\label{eq:rd_cont}
    \dot{x}_i = x_i\left[(\mtx{M}\vec{x})_i - \vec{x}^\top\mtx{M}\vec{x}\right], \qquad \text{for every }i \in \intupto{n}.
\end{equation}
It is known that $\Delta_n$ is invariant under the flow defined by Eq.~\eqref{eq:rd_cont}, see e.g. \cite{Hofbauer_Sigmund_Egap_1998}, and the corresponding dynamical system defined on $\Delta_n$ by Eq.~\eqref{eq:rd_cont} is known as the continuous-time \emph{replicator dynamics}
with payoff-matrix $\mtx{M}$. We recall that a point $\vec{z}\in \Delta_n$ is \emph{stationary} for Eq.~\eqref{eq:rd_cont} if the constant function $\vec{x}(t) = \vec{z}$ is a trajectory under Eq.~\eqref{eq:rd_cont}.
Bomze provided in \cite{bomze_1997} a characterization of stationary points for Eq.~\eqref{eq:rd_cont}.
In particular, this characterization can be rephrased as follows when specialized to the case $\mtx{M} = \mtx{A} + c \mtx{I}$:
\begin{theorem}\label{thm:replicator}
    A point $\vec{x} \in \Delta_n$ is stationary for the replicator dynamics with payoff-matrix $\mtx{A} + c \mtx{I}$ if and only if $\vec{x} \in \gkkt{c}$.
\end{theorem}
\begin{proof}
    Imposing $\dot{\vec{x}}=0$ in Eq.~\eqref{eq:rd_cont}, it is easy to see that a point $\vec{x} \in \Delta_n$ is stationary for the replicator dynamics with payoff-matrix $\mtx{M}$ if and only if there exists some $\lambda \in \bbR$ such that $(\mtx{M}\vec{x})_i = \lambda$ for every $i \in \supp(\vec{x})$. By Proposition~\ref{prop:kkt}, this is equivalent to $\vec{x} \in \gkkt{c}$ in case $\mtx{M} = \mtx{A} + c \mtx{I}$.
\end{proof}
Thanks to Theorem~\ref{thm:replicator}, every result presented in this paper about $\gkkt{c}$ can be restated as a result about stationary points for the replicator dynamics with payoff-matrix $\mtx{A} + c \mtx{I}$.
This can be used, for instance, to describe the following consequence of Proposition~\ref{prop:one-at-most}. Let $\vec{x} \in \Delta_n$ and assume $\vec{x}$ is not a characteristic vector. As reported in \cite{bomze_annealed_2002,pelillo_torsello_payoff_monotonic_2006}, an analysis on the spectrum of $\mtx{A} + c \mtx{I}$ shows that if $c$ lies in a certain range, which depends on both $\supp(\vec{x})$ and $\mtx{A}$, then $\vec{x}$ cannot be a stationary point for the replicator dynamics with payoff-matrix $\mtx{A} + c \mtx{I}$. This information is applied in \cite{bomze_annealed_2002,pelillo_torsello_payoff_monotonic_2006} to derive annealing procedures on $c$ providing heuristics for the maximum clique problem --- see also \cite{daniluk_ImplementationMaximumClique_2019} for a parallelized implementation.
By Proposition~\ref{prop:one-at-most}, we can say more: since $\vec{x}$ lies in $\gkkt{\gamma}$ for at most one value of $\gamma \in \bbR$, if $\vec{x}$ is a stationary point for the replicator dynamics with payoff-matrix $\mtx{A} + c \mtx{I}$, then $\vec{x}$ is no longer a stationary point if we perturb $c$ to change its value, and this is true for \emph{any} non-null perturbation on $c$.

Moreover, the following theorem reports additional known results, which can be proved using that the matrix $\mtx{A} + c \mtx{I}$ is \emph{symmetric}, see e.g. \cite[Theorem~2]{bomzeRegularityDegeneracyDynamics2002}, \cite[Theorem~3, 
 Lemma~4]{bomze_1997} and \cite{akin_RecurrenceUnfit_1982}:
\begin{theorem}\label{thm:solver}
    Let $\vec{x}(t)$ be a non-stationary trajectory under Eq.~\eqref{eq:rd_cont} for $\mtx{M} = \mtx{A} + c \mtx{I}$ and such that $\vec{x}(0) \in \Delta_n$. Then:
    \begin{mylist}
        \item The function $(f_c \circ \vec{x})(t) = \vec{x}^{\top}(t)(\mtx{A} + c \mtx{I})\vec{x}(t)$ is strictly increasing in $t$;
        \item There exists $\vec{z} = \lim_{t \to +\infty}\vec{x}(t)$ and  $\vec{z} \in \gkkt{c}$;
        \item If $\vec{x}(0) \in \relint{\Delta_n}$ then $\vec{z} \in \kkt{c}$.
    \end{mylist}
\end{theorem}
Theorem~\ref{thm:solver} motivates the use of replicator dynamics to solve the generalized Motzkin-Straus QP \cite{bomze_1997,bomze_annealed_2002,pelillo_torsello_payoff_monotonic_2006,pelillo_ReplDynaComb_2008,daniluk_ImplementationMaximumClique_2019} since it guarantees that the replicator dynamics trajectories initialized in $\relint{\Delta_n}$ will always converge to a KKT point of the program.
However, although such trajectories will never hit a (proper) face in finite time (see e.g. \cite[p.~119]{weibull_1995_EGT}), in practical numerical implementations they might be subject to additional complications. In fact, a ``jump'' into a face of $\Delta_n$ may occur for various reasons, such as approximation errors due to floating-point arithmetic or thresholding procedures, and sometimes it might even be an explicit design choice to speed up convergence.\footnote{Bomze \cite{bomze_LotkaVolterraEquationReplicator_1983} shows several examples of interior trajectories that, after getting very close to a simplex face, suddenly move away from it.  In such cases, approximation or thresholding issues as the ones described above might result in an improper convergence of the algorithm.}
We remark that once a trajectory hits a face it cannot move away from it, and if that face does not contain a KKT point then the process is guaranteed to converge {\em only} to a generalized KKT point.
In this respect, our results can be seen as a contribution towards understanding the properties of the limit points of such ill-behaved (simulated) trajectories, in terms of properties of the subgraphs induced by their support.
\section{Conclusion}
In this article, we discussed properties of the generalized Karush-Kuhn-Tucker points associated with a parametric version of the Motzkin-Straus QP introduced by Bomze.
We extended some known results regarding characteristic vectors and generalized KKT points, and provided new insights into these points and the symmetries of the subgraphs induced by their support.
Thanks to a suitable use of barycentric coordinates, we studied the generalized KKT points by representing them as convex combinations of characteristic vectors. Finally, considering equitable bipartitions for induced subgraphs, we identified some generalized KKT points associated with generalized star graphs and related to the well-known spurious solution of the Motzkin-Straus QP for the cherry graph.

We conclude by suggesting potential research directions for future work. On the one hand, it would be interesting to find a weaker version of Theorem~\ref{thm:reduced-dynamics-converse} that does not involve equitable partitions, as it could lead to a more precise description of generalized KKT points.
On the other hand, an interesting direction is related to the replicator dynamics.
In particular, Theorem~\ref{thm:reduced-dynamics} and Theorem~\ref{thm:reduced-dynamics-converse} provide a correspondence between stationary points associated with two distinct replicator dynamics, evolving on simplices having (in general) different affine dimension. 
It would be interesting to further explore this correspondence and to understand whether it may simplify the search for replicator dynamics stationary points
via a suitable dimensionality reduction. Additionally, it would be interesting to investigate  to what extent the consequence of Proposition~\ref{prop:one-at-most} mentioned in  Sect.~\ref{sec:rd} may be useful to improve the annealing procedures described in \cite{bomze_annealed_2002,pelillo_torsello_payoff_monotonic_2006,daniluk_ImplementationMaximumClique_2019}.
Finally, as explained in Sect.~\ref{sec:rd}, numerical implementations of the replicator dynamics with payoff matrix $\mtx{A}+c \mtx{I}$ may return vectors of $\gkkt{c}$ that are not KKT points for the generalized Motzkin-Straus QP, despite the theory regarding the trajectories initialized in $\relint{\Delta_n}$. The results presented in this paper suggest that these vectors could still offer insights into the underlying graph structure. It remains an open question, however, whether this kind of information might be useful in practical applications.
\section*{Acknowledgments}
The authors thank the two anonymous reviewers for their helpful suggestions and Immanuel~M.~Bomze for providing valuable bibliographic recommendations. Moreover, the authors would like to thank Morteza H.~Chehreghani and Sebastiano Smaniotto for their help during the initial stages of this work.

\section*{Declarations}
\subsection*{Statements and Declarations}
\bmhead{Funding} Guglielmo Beretta's scholarship is funded jointly by Ca' Foscari University of Venice and by Polytechnic University of Turin.
\bmhead{Competing interests} The authors have no competing interests to declare that are relevant to the content of this article.
\bmhead{Ethics approval} Not applicable.
\bmhead{Consent to participate} Not applicable.
\bmhead{Consent for publication} Not applicable.
\bmhead{Availability of data and materials} Not applicable.
\bmhead{Code availability} Not applicable.
\bmhead{Authors' contributions} 
All authors contributed to the study conception and design. The first draft of the manuscript was written jointly by Alessandro Torcinovich and Guglielmo Beretta. This study has been performed under the supervision of Marcello Pelillo. All authors commented on previous versions of the manuscript. All authors read and approved the final manuscript.
\bmhead{Artwork} Figure~\ref{fig:cherry}, Figure~\ref{fig:counterexample} and Figure~\ref{fig:flower} have been produced using \texttt{draw.io}.
\bmhead{Conference Paper} A shorter version of this paper has been presented at the 7th International Conference on the Dynamics of Information Systems (DIS 2024) and will appear in the conference proceedings in the series Lecture Notes in Computer Science, Springer.

\backmatter

\end{document}